\newcommand{\Hcal}{\mathcal{H}}
\newcommand{\Kcal}{\mathcal{K}}
\newcommand{\Lcal}{\mathcal{L}}
\newcommand{\Pcal}{\mathcal{P}}
\newcommand{\Scal}{\mathcal{S}}
\newcommand{\Dcal}{\mathcal{D}}
\newcommand{\Bcal}{\mathcal{B}}
\newcommand{\Ical}{\mathcal{I}}
\newcommand{\Mcal}{\mathcal{M}}
\newcommand{\Tcal}{\mathcal{T}}
\newcommand{\proj}[2]{|#1\rangle\langle #2|}
\newcommand{\eps}{\varepsilon}
\theoremstyle{plain}
\newtheorem{prop}{Proposition}[section]
\newtheorem{cor}[prop]{Corollary}
\newtheorem{thm}[prop]{Theorem}
\newtheorem{defi}[prop]{Definition}
\theoremstyle{remark}
\newtheorem{exe}[prop]{Example}
\newtheorem{rmq}[prop]{Remark}
\def\h{{\mathfrak h}}
\newcommand{\E}{\mathbb{E}}
\newcommand{\C}{\mathbb{C}}
\newcommand{\R}{\mathbb{R}}
\newcommand{\Z}{\mathbb{Z}}
\newcommand{\N}{\mathbb{N}}
\newcommand{\Pro}{\mathbb{P}}
\newcommand{\pp}{\mathbb{P}}
\newcommand{\Hb}{\mathcal{H}}
\newcommand{\Bb}{\mathcal{B}}
\newcommand{\Sb}{\mathcal{S}}
\newcommand{\Kb}{\mathcal{K}}
\newcommand{\Tr}{\mathrm{Tr}}
\newcommand{\ii}{\mathrm{i}}
\newcommand{\Lin}{\mathcal{L}}
\newcommand{\der}{\mathrm{d}}
\newcommand{\ind}{\mathds{1}}
\DeclareMathOperator{\ran}{Ran}
\newcommand{\braket}[2]{\langle #1,#2\rangle}
\newcommand{\ketbra}[2]{| #1\rangle\!\langle #2|}
\newcommand{\id}{{\mathrm{Id}}}
\begin{document}

\title{Recurrence and transience of continuous-time open quantum walks}

\author[1]{Ivan Bardet}
\author[2]{Hugo Bringuier}
\author[3]{Yan Pautrat}
\author[2]{Clément Pellegrini}

\affil[1]{Institut des Hautes \'{E}tudes Scientifiques, Universit\'{e} Paris-Saclay, 91440 Bures-sur-Yvette, France}
\affil[2]{Institut de Math\'ematiques de Toulouse; UMR5219, UPS IMT, F-31062 Toulouse Cedex 9, France}
\affil[3]{Laboratoire de Math\'ematiques d'Orsay\\
Université Paris-Sud, CNRS, Universit\'e
Paris-Saclay\\ 91405 Orsay, France}

\maketitle

\begin{abstract}
This paper is devoted to the study of continuous-time processes known as continuous-time open quantum walks (CTOQWs). A CTOQW represents the evolution of a quantum particle constrained to move on a discrete graph, but also has internal degrees of freedom modeled by a state (in the quantum mechanical sense), and contain as a special case continuous-time Markov chains on graphs. Recurrence and transience of a vertex are an important notion in the study of Markov chains, and it is known that all vertices must be of the same nature if the Markov chain is irreducible. In the present paper we address the corresponding results in the context of irreducible CTOQWs. Because of the ``quantum'' internal degrees of freedom, CTOQWs exhibit non standard behavior, and the classification of recurrence and transience properties obeys a ``trichotomy'' rather than the classical dichotomy. Essential tools in this paper are the so-called ``quantum trajectories'' which are jump stochastic differential equations which can be associated with CTOQWs.
\end{abstract}
\section*{Introduction}

Open quantum walks (OQW) have been developed originally in \cite{Att1,Att2}. They are natural quantum extensions of the classical Markov chain and, in particular, any classical discrete time Markov chain on a finite or countable set can be obtained as a particular case of OQW. Roughly speaking, OQW are random walks on a graph where, at each step, the walker jumps to the next position following a law which depends on an internal degree of freedom, the latter describing a quantum-mechanical state. From a physical point of view, OQW are simple models offering different perspective of applications (see \cite{Pet2,Pet1}). From a mathematical point of view, their properties can been studied in analogy with those of classical Markov chain. In particular, usual notions such as ergodicity, central limit theorem, irreducibility, period \cite{Att3,Pa2,Pa3,Pa4,La2,La3} have been investigated. For example, the notions of transience and recurrence have been studied in \cite{Pa1}, proper definitions of these notions have been developed in this context and the analogues of transient or recurrent points have been characterized. An interesting feature is that the internal degrees of freedom introduce a source of memory which gives rise to a specific non-Markovian behavior. Recall that, in the classical context (\cite{Norris}), an exact dichotomy exists for irreducible Markov chains: a point is either recurrent or transient, and the nature of a point can be characterized in terms of the first return time and the number of visits. In contrast, irreducible open quantum walks exhibit three possibities regarding the behavior of return time and number of visits. In this article, we study the recurrence and transience, as well as their characterizations, for continuous-time versions of OQW. 

In the same way that open quantum walks are quantum extensions of discrete-time Markov chains, there exist natural quantum extensions of continuous-time Markov processes. One can point to two different types of continuous-time evolutions with a structure akin to open quantum walks. The first (see \cite{Be1}) is a natural extension of classical Brownian motion and is called open quantum brownian motion; it is obtained by considering OQW in the limit where both time and in space are properly rescaled to continuous variables. The other type of such evolution (see \cite{Pel1}) is an analogue of continuous-time Markov chains on a graph, is obtained by rescaling time only, and is called continuous-time open quantum walks (CTOQW). In this article we shall concentrate on the latter.

Roughly speaking CTOQW represents a continuous-time evolution on a graph where a ``walker'' jumps from node to node at random times. The intensity of jumps depends on the internal degrees of freedom; the latter is modified by the jump, but also evolves continuously between jumps. In both cases the form of the intensity, as well as the evolution of the internal degrees of freedom at jump times and between them, can be justified from a quantum mechanical model.

As is well-known, in order to study a continuous-time Markov chain, it is sufficient to study the value of the process at the jump times. Indeed, the time before a jump depends exclusively on the location of the walker, and the destination of the jump is independent of that time. As a consequence, the process restricted to the sequence of jump times is a discrete time Markov chain, and all the properties of that discrete time Markov chain such as irreducibility, period,  transience, recurrence, are transferred  to the continuous-time process. This is not the case for OQW. In particular, a CTOQW restricted to its jump times is not a (discrete-time) open quantum walk. Therefore, the present study of recurrence and transience cannot be directly derived from the results in \cite{Pa1}. Nevertheless, we can still adopt a similar approach and, for instance, we study irreducibility in the sense of irreducibility for quantum dynamical systems (as defined in \cite{Dav}, see also \cite{Fa1}). As in the discrete case, we obtain a trichotomy, in the sense that CTOQW can be classified in three different possible statuses, depending on the properties of the associated return time and number of visits. 
\medskip

The paper is structured as follows. In Section 1, we recall the definition of continuous-time open quantum walks and in particular we introduce useful classical processes attached to CTOQW. Section 2 is devoted to the notion of irreducibility for CTOQW. In Section 3, we address the question of recurrence and transience and give the classification of CTOQW .

\section{Continuous time open quantum walks and their associated classical processes}\label{sectCTOQW}

This section is devoted to the introduction of continuous-time open quantum walks (CTOQW). In Subsection \ref{sectnotations}, we introduce CTOQW as a special instance of quantum Markov semigroups (QMSs) with generators preserving a certain block structure. Subsection \ref{sect:Dyson} is devoted to the exposition of the Dyson expansion associated to a QMS, which will serve as a tool in all remaining sections. It also allows us to introduce the relevant probability space. Finally, in Subsection \ref{sectQtraj} we associate to this stochastic process a Markov process called \emph{quantum trajectory} which has an additional physical interpretation, and that will be useful in its analysis.

\subsection{Definition of continuous-time open quantum walks}\label{sectnotations}
Let $V$ denotes a set of vertices, which may be finite or countably infinite. CTOQWs are quantum analogues of continuous-time Markov semigroups acting on the set $\mathrm L^{\infty}(V)$ of bounded functions on $V$. They are associated to stochastic processes evolving in the composite system
\begin{equation} \label{eq_decH1}
\Hcal=\bigoplus_{i\in V} \h_i\,,
\end{equation}
where the $\h_i$ are separable Hilbert spaces. This decomposition has the following interpretation: the label $i$ in $V$ represents the position of a particle and, when the particle is located at the vertex $i\in V$, its internal state is encoded in the space $\h_i$ (see below). Thus, in some sense, the space $\h_i$ describes the internal degrees of freedom of the particle when it is sitting at site $i\in V$. When $\h_i$ does not depend on $i$, that is if $\h_i\simeq\h$, for all $i\in V$, one has the identification $\Hcal\approx\h\otimes \ell^2(V)$ and then it is natural to write $\h_i=\h\otimes\vert i\rangle$ (we use here Dirac's notation where the \emph{ket} $|i\rangle$ represents the $i$-th vector in the canonical basis of $\ell^\infty(V)$, the \emph{bra} $\langle i |$ for the associated linear form, and $\proj{i}{j}$ for the linear map $f\mapsto \langle j|f\rangle\,|i\rangle$). We will adopt the notation $\h_i\otimes \vert i\rangle$ to denote $\h_i$ in the general case (i.e.\ when $\h_i$ depends on $i$) to emphasize the position of the particle, using the identification $\h_i\otimes\C\simeq\h_i$. We thus write:   
\begin{equation}\label{eq_decH2}
\Hcal=\bigoplus_{i\in V} \h_i\otimes \vert i\rangle\,.
\end{equation}
Last, we denote by $\mathcal I_1(\Kcal)$ the two-sided ideal of trace-class operators on a given Hilbert space $\Kcal$ and by $\Sb_{\Kcal}$ the space of density matrices on $\Kcal$, defined by:
\[\Sb_{\Kcal}=\{\rho\in\mathcal I_1(\Kcal)\mid\rho^*=\rho,\rho\geq 0 , \Tr(\rho)=1 \}.\]
A \emph{faithful} density matrix is an invertible element of $\Sb_\Kcal$, which is therefore a trace-class and positive-definite operator. Following quantum mechanical fashion, we will use the word ``state'' interchangeably with ``density matrix''.
\medskip

We recall that a quantum Markov semigroup (QMS) on $\mathcal I_1(\Kcal)$ is a semigroup $\Tcal:=(\Tcal_t)_{t\geq0}$ of completely positive maps on $\mathcal I_1(\Kcal)$ that preserve the trace. The QMS is said to be uniformly continuous if ${\lim}_{t\to 0}\|\Tcal_t-\id\|=0$ for the operator norm on $\Bcal(\Kcal)$. It is then known (see \cite{lind}) that the semigroup $(\Tcal_t)_{t\geq0}$ has a generator $\Lcal=\lim_{t\to\infty} (\Tcal_t-\id)/t$ which is a bounded operator on $\mathcal I_1(\Kcal)$, called the Lindbladian, and Lindblad's Theorem characterizes the structure of such generators. One consequently has $\Tcal_t=e^{t\Lcal}$ for all $t\geq0$, where the exponential is understood as the limit of the norm-convergent series.

Continuous-time open quantum walks are particular instances of uniformly continuous QMS on $\mathcal I_1(\Hcal)$, for which the Lindbladian has a specific form. To make this more precise, we define the following set of block-diagonal density matrices of $\Hcal$:
\[\mathcal D=\big\{\mu\in\mathcal S(\Hcal)\, ;\, \mu=\sum_{i\in V}\rho(i)\otimes\vert i\rangle\langle i\vert\big\} \, .\]
In particular, for $\mu\in\mathcal D$ with the above definition, one has $\rho(i)\in\Ical_1(\h_i)$, $\rho(i)\geq0$ and $\sum_{i\in V}\Tr\big(\rho(i)\big)=1$. In the sequel, we use the usual notations $[X,Y]=XY-YX$ and $\{X,Y\}=XY+YX$, which stand respectively for the commutator and anticommutator of two operators $X,Y\in\Bcal(\Hcal)$.

\begin{defi}\label{def_CTOQW}
Let $\Hcal$ be a Hilbert space that admits a decomposition \eqref{eq_decH1}. A \emph{continuous-time open quantum walk} is a uniformly continuous quantum Markov semigroup on $\mathcal I_1(\Hcal)$ such that its Lindbladian $\Lcal$ can be written:
\begin{equation}\label{eq_defi_CTOQW}
\begin{array}{ccll}\Lcal:&\mathcal I_1(\Hcal)&\rightarrow&\mathcal I_1(\Hcal)\\
&\mu&\mapsto &-\ii [H,\mu]+ \displaystyle{\sum_{i,j\in V}} \ind_{i\neq j} \big({S_i^j}  \mu S_i^{j*}-\frac{1}{2}\{{S_i^{j*} S_i^j},  \mu \}\big),
\end{array}
\end{equation}
 where $H$ and $(S_i^j)_{i,j}$ are bounded operators on $\Hcal$ that take the following form: 
\begin{itemize}
\item $H=\sum_{i\in V} H_i\otimes|i\rangle\langle i|$, with $H_i$ bounded self-adjoint operators on $\h_i$, i in $V$;
\item for every $i\neq j\in V$, $S_i^j$ is a bounded operator on $\Hcal$ with support included in $\h_i$ and with range included in $\h_j$, and such that the sum $\sum_{i,j\in V}\,S_i^{j*}S_i^{j}$ converges in the strong sense. Consistently with our notation, we can write $S_i^j=R_i^j\otimes\proj{j}{i}$ for bounded operators $R_i^j\in\Bcal(\h_i,\h_j)$. 
\end{itemize}
We will say that the open quantum walk is \emph{semifinite} if $\dim \h_i<\infty$ for all $i\in V$.
\end{defi}
From now on we will use the convention that $S_i^i=0$, $R_i^i=0$ for any $i\in V$. As one can immediately check, the Lindbladian $\Lcal$ of a CTOQW preserves the set $\mathcal D$. More precisely, for $\mu=\sum_{i\in V}\rho(i)\otimes\vert i\rangle\langle i\vert\in\Dcal$, denoting $\Tcal_t(\mu)=:\sum_{i\in V}\rho_{t}(i)\otimes|i\rangle\langle i| $ for all $t\geq0$, we have for all~$i\in V$
\[\frac{\der}{\der t}\rho_{t}(i)= -i[H_i,\rho_{t}(i)] +\sum\limits_{j\in V} \, \big({R_j^i}  \rho_{t}(j) R_j^{i*}-\frac{1}{2}\{{R_i^{j*} R_i^j},  \rho_{t}(i) \}\big)\,,\]

\subsection{Dyson expansion and associated probability space}\label{sect:Dyson}

In this article, our main focus is a stochastic process $(X_t)_{t\geq0}$ that informally represents the position of a particle or a walker constrained to move on $V$. In order to rigorously define this process and its associated probability space, we need to introduce the \emph{Dyson expansion} associated to a CTOQW. In particular, this allows to define a probability space on the possible trajectories of the walker. We will recall the result for general QMS as we will use it in the next section. The application to CTOQW is described shortly afterwards.
\smallskip

Let $(\Tcal_t)_{t\geq0}$ be a uniformly continuous QMS with Lindbladian $\Lcal$ on $\Ical_1(\Kcal)$ for some separable Hilbert space $\Kcal$. By virtue of Lindblad's Theorem \cite{lind}, there exists a bounded self-adjoint operator $H\in\Bcal(\Kcal)$ and bounded operators $L_i$ on $\Kcal$ ($i\in I$) such that for all $\mu\in\Ical_1(\Kcal)$,
\[\Lcal(\mu)\,=-i[H,\mu]+\displaystyle{\sum_{i\in I}}  \big(L_i\mu L_i^*-\frac{1}{2}\{L_iL_i^*,\mu\}\big)\,,\]
where $I$ is a finite or countable set and where the series is strongly convergent. The first step is to give an alternative form for the Lindbladian. First introduce 
\[ G:=-\ii H-\frac{1}{2}\sum_{i\in I} L_i^*L_i\,,\]
so that for any $\mu\in\Dcal$,
\begin{equation}\label{lindblad}
\Lcal(\mu)=G\mu+\mu G^*+\displaystyle{\sum_{i\in I}} {L_i}\,\mu\,L_i^*\,.
\end{equation}
Remark that $G+G^*+\sum_{i\in I} L_i^*L_i =0$, so that \eqref{lindblad} is the general form of the generator of a QMS, as given by Lindblad \cite{lind}. The operator $-(G+G^*)$ is positive semidefinite and $t\mapsto e^{t G}$ defines a one-parameter semigroup of contractions on $\Kcal$ by a trivial application of the Lumer-Phillips theorem (see e.g.\ Corollary 3.17 in \cite{EngelNagel}). We are now ready to give the Dyson expansion of the QMS. 

\begin{prop}\label{prop:dyson}
Let $(\Tcal_t)_{t\geq0}$ be a QMS with Lindbladian $\Lcal$ as given above. For any initial density matrix $\mu\in\Scal_\Kcal$, one has 
\begin{equation}\label{eq:prop_dyson1}
\begin{aligned}
\Tcal_t(\mu)
& =\sum_{n=0}^\infty\sum_{i_1,\ldots,i_n\in I}\int_{0<t_1<\cdots<t_n<t} \,
\,\zeta_t(\upxi)\, \mu\, \zeta_t(\upxi)^*\,\der t_{1}\cdots \der t_{n} \,,
\end{aligned}
\end{equation}
where $\zeta_t(\upxi)=e^{(t-t_n)\,G}\,L_{i_n}\,\cdots\,L_{i_1}\,e^{t_1\,G}$ for $\upxi=(i_1,\ldots,i_n;t_1,\ldots,t_n)$.
\end{prop}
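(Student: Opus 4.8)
\medskip
\noindent\textbf{Proof strategy.} The plan is to recognize \eqref{eq:prop_dyson1} as the interaction-picture (Dyson) expansion of the norm-continuous semigroup $(e^{t\Lcal})_{t\ge0}$, obtained by perturbing the ``free'' generator $\Lcal_0(\mu):=G\mu+\mu G^*$ by the completely positive ``jump'' map $\mathcal J(\mu):=\sum_{i\in I}L_i\,\mu\,L_i^*$, so that $\Lcal=\Lcal_0+\mathcal J$. The first thing to record is that both pieces are bounded operators on $\Ical_1(\Kcal)$: $\Lcal_0$ because $G$ is bounded, and $\mathcal J=\Lcal-\Lcal_0$ because both $\Lcal$ (generator of a uniformly continuous QMS) and $\Lcal_0$ are; quantitatively, on a positive $\mu$ one has $\|\mathcal J(\mu)\|_1=\Tr\big(\sum_{i}L_i^*L_i\,\mu\big)\le\|G+G^*\|\,\|\mu\|_1$, using $\sum_i L_i^*L_i=-(G+G^*)$. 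This is exactly the place where the merely strong convergence of $\sum_i L_i^*L_i$ is upgraded to honest norm control, which is what will legitimize the interchanges of limits below.

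Next I would identify the free evolution explicitly. Since $\mu\mapsto G\mu$ and $\mu\mapsto\mu G^*$ are commuting bounded operators on $\Ical_1(\Kcal)$ whose exponentials are $\mu\mapsto e^{tG}\mu$ and $\mu\mapsto\mu\,e^{tG^*}$, one gets $e^{t\Lcal_0}(\mu)=e^{tG}\,\mu\,e^{tG^*}$; and because $t\mapsto e^{tG}$ is a contraction semigroup on $\Kcal$, $e^{t\Lcal_0}$ is a positivity-preserving contraction of $\Ical_1(\Kcal)$, i.e.\ $\|e^{tG}\mu\,e^{tG^*}\|_1\le\|\mu\|_1$.

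Then I would iterate Duhamel's identity $e^{t\Lcal}=e^{t\Lcal_0}+\int_0^t e^{(t-s)\Lcal_0}\,\mathcal J\,e^{s\Lcal}\,\der s$ (valid because $\Lcal$ and $\Lcal_0$ are bounded): substituting it into itself $n$ times produces the partial sum $\sum_{k=0}^{n}\int_{0<t_1<\cdots<t_k<t}e^{(t-t_k)\Lcal_0}\mathcal J\,e^{(t_k-t_{k-1})\Lcal_0}\mathcal J\cdots\mathcal J\,e^{t_1\Lcal_0}(\mu)\,\der t_1\cdots\der t_k$ together with an explicit remainder. Using that the $e^{s\Lcal_0}$ are contractions, that $\mathcal J$ is completely positive with $\|\mathcal J(\nu)\|_1\le\|G+G^*\|\,\|\nu\|_1$ on $\nu\ge0$, and splitting a general $\mu$ into four positive parts, the $k$-th integral term is bounded in trace norm by $4\,\|G+G^*\|^{k}\,t^{k}\,\|\mu\|_1/k!$ (the volume of the simplex); hence the series converges absolutely and uniformly for $t$ in compact sets and the Duhamel remainder tends to $0$, giving
\[
e^{t\Lcal}(\mu)=\sum_{n=0}^\infty\int_{0<t_1<\cdots<t_n<t} e^{(t-t_n)\Lcal_0}\,\mathcal J\,e^{(t_n-t_{n-1})\Lcal_0}\,\mathcal J\cdots\mathcal J\,e^{t_1\Lcal_0}(\mu)\;\der t_1\cdots\der t_n\,.
\]
(Alternatively, one could take the right-hand side of \eqref{eq:prop_dyson1} as given, verify by dominated convergence that it is strongly differentiable in $t$ with derivative $\Lcal$ applied to it and value $\mu$ at $t=0$, and conclude by uniqueness for the linear ODE $\dot\mu_t=\Lcal(\mu_t)$.)

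Finally I would substitute $e^{s\Lcal_0}(\rho)=e^{sG}\rho\,e^{sG^*}$ and $\mathcal J(\rho)=\sum_{i\in I}L_i\rho L_i^*$ into each of the $n$ slots; expanding the finite sums and, thanks to the uniform $4\|G+G^*\|^nt^n/n!$ bound, exchanging the resulting sum $\sum_{i_1,\dots,i_n\in I}$ with the time integral, the integrand becomes $\sum_{i_1,\dots,i_n\in I}\zeta_t(\upxi)\,\mu\,\zeta_t(\upxi)^*$ with $\upxi=(i_1,\dots,i_n;t_1,\dots,t_n)$ and $\zeta_t(\upxi)$ exactly the operator of the statement (the intermediate free propagators $e^{(t_{k+1}-t_k)G}$ being the factors hidden in the $\cdots$), which is \eqref{eq:prop_dyson1}. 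The only genuinely delicate points in the whole argument are bookkeeping ones: justifying the term-by-term exchange of the infinite $I^{n}$-sums with the integrals and the convergence of the resulting double series; both are handled by the single estimate $4\|G+G^*\|^nt^n/n!$, which in turn rests entirely on $-(G+G^*)=\sum_iL_i^*L_i$ being a bounded positive operator, that is, on the uniform continuity of the semigroup.
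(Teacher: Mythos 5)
Your proof is correct. The paper itself gives no proof of this proposition -- it treats the Dyson expansion as a known fact, deferring to the literature (Davies, Srinivas--Davies) -- and your Duhamel iteration of $\Lcal=\Lcal_0+\mathcal J$ with $e^{t\Lcal_0}(\mu)=e^{tG}\mu e^{tG^*}$, the trace-norm bound $\|\mathcal J(\nu)\|_1\le\|G+G^*\|\,\|\nu\|_1$ on positive $\nu$, and the resulting $\|G+G^*\|^n t^n/n!$ control of the simplex integrals is exactly the standard argument those references supply. The points you flag as delicate (boundedness of $\sum_i L_i^*L_i$ via $-(G+G^*)$, and the interchange of the $I^n$-sums with the time integrals) are indeed the only ones needing care, and you handle them correctly.
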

We now turn to applying this to CTOQW. Due to the block decomposition of $H$ and of the $S^i_j$, one can write $G=\sum_{i\in V}G_i\otimes|i\rangle\langle i|$, where
\begin{equation}\label{eq_Gi}
  G_i=-\ii H_i-\frac{1}{2}\sum_{j\in V\setminus\{i\}} R_i^{j*}R_i^j \, .
\end{equation}
From \Cref{prop:dyson} we then get the following expression for the Lindbladian: for all $\mu=\sum_{i\in V}\rho(i)\otimes\vert i\rangle\langle i\vert$ in $\Dcal$,
\begin{equation}\label{eq_lind_alternative}
 \Lcal(\mu)=\sum\limits_{i\in V}\Big(G_i\rho(i)+\rho(i) G_i^* +\sum\limits_{j\neq i\in V}R_j^i\,\rho(j)\,R_j^{i*}\Big)\otimes|i\rangle\langle i| \, .
\end{equation}

\begin{cor}\label{prop:dyson_CTOQW}
Let $(\Tcal_t)_{t\geq0}$ be a CTOQW with Lindbladian $\Lcal$ given by \Cref{eq_lind_alternative}. For any initial density matrix $\mu\in\Dcal$, one has 
\begin{equation}\label{eq:prop_dyson_CTOQW1}
\Tcal_t(\mu)=\sum_{n=0}^\infty\sum_{i_0,\ldots,i_n\in V}\int_{0<t_1<\cdots<t_n<t} T_t(\xi)\, \rho({i_0})T_t(\xi)^*\der t_{1}\cdots \der t_{n}\,\otimes\proj{i_n}{i_n} \, ,
\end{equation}
where, for $\xi=(i_0,\ldots,i_n;t_1,\ldots,t_n)$ with $i_0,\ldots,i_n\in V^{n+1}$ and $0<t_1<\ldots<t_n<t$,
\begin{equation}\label{eq:prop_dyson_CTOQW2}
T_t(\xi)
:=e^{(t-t_{n}) G_{i_n}}\,R_{i_{n-1}}^{i_n}\,e^{(t_n-t_{n-1})G_{i_{n-1}}}\,\cdots\,e^{(t_2-t_{1})G_{i_1}}\,R_{i_0}^{i_1}\, e^{t_{1} G_{i_0}}.
\end{equation}
\end{cor}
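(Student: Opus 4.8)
The plan is to obtain \Cref{prop:dyson_CTOQW} from the general Dyson expansion of \Cref{prop:dyson} by specializing it to the CTOQW structure and carefully rearranging the resulting series. Recall that the Lindbladian of a CTOQW is of Lindblad form with index set $I=\{(i,j)\in V\times V\mid i\neq j\}$ and jump operators $L_{(i,j)}=S_i^j=R_i^j\otimes\proj{j}{i}$, and that the associated operator $G$ is block diagonal, $G=\sum_{i\in V}G_i\otimes\proj{i}{i}$, with $G_i$ as in \eqref{eq_Gi}; in particular the strong convergence of $\sum_{i,j}S_i^{j*}S_i^j$ guarantees that the hypotheses of \Cref{prop:dyson} hold. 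First I would apply \eqref{eq:prop_dyson1} to $\mu=\sum_{i_0\in V}\rho(i_0)\otimes\proj{i_0}{i_0}\in\Dcal$ and split the output by linearity according to the starting vertex $i_0$. A generic term of the expansion is then indexed by $n\in\N$, a sequence of pairs $(a_1,b_1),\ldots,(a_n,b_n)\in I$ and times $0<t_1<\cdots<t_n<t$, and equals $\zeta_t\,(\rho(i_0)\otimes\proj{i_0}{i_0})\,\zeta_t^*$ with
\[
\zeta_t=e^{(t-t_n)G}\,(R_{a_n}^{b_n}\otimes\proj{b_n}{a_n})\,e^{(t_n-t_{n-1})G}\,\cdots\,(R_{a_1}^{b_1}\otimes\proj{b_1}{a_1})\,e^{t_1 G}.
\]

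The decisive observation is that $e^{sG}$ is block diagonal while $\proj{b_k}{a_k}$ sends $\h_{a_k}\otimes|a_k\rangle$ to $\h_{b_k}\otimes|b_k\rangle$; hence $\zeta_t$ annihilates $\h_{i_0}\otimes|i_0\rangle$ unless $a_1=i_0$ and $b_k=a_{k+1}$ for $1\le k\le n-1$. Writing $i_k:=b_k$ for the position after the $k$-th jump, the surviving terms are exactly those in which the pair sequence traces a path $i_0,i_1,\ldots,i_n$ in $V$, and on such a term each block-diagonal exponential collapses onto the relevant summand, giving $\zeta_t=T_t(\xi)\otimes\proj{i_n}{i_0}$ with $\xi=(i_0,\ldots,i_n;t_1,\ldots,t_n)$ and $T_t(\xi)$ precisely as in \eqref{eq:prop_dyson_CTOQW2} (this is the routine identification of $G$ and of $S_i^j$ with their restrictions $G_i$, $R_i^j$ to the relevant blocks, together with tracking the position ket through the product). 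Consequently
\[
\zeta_t\,(\rho(i_0)\otimes\proj{i_0}{i_0})\,\zeta_t^*=\big(T_t(\xi)\,\rho(i_0)\,T_t(\xi)^*\big)\otimes\proj{i_n}{i_n},
\]
which is exactly the summand of \eqref{eq:prop_dyson_CTOQW1}. Re-indexing the sum over pair sequences by the admissible vertex paths then produces \eqref{eq:prop_dyson_CTOQW1}, the convention $S_i^i=R_i^i=0$ automatically forbidding $i_{k-1}=i_k$.

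The one point requiring care is the legitimacy of this rearrangement: one must be sure that replacing a sum over arbitrary jump-label sequences by a sum over admissible vertex paths is not merely formal. I expect this to be dispatched by the remark that the map sending a pair sequence to its term value is identically zero off the set of admissible paths, so the two index sets produce literally the same family of nonzero terms; the convergence (indeed monotone convergence, since each term is of the form ``completely positive map applied to a positive operator'' so that one may apply Tonelli to the nonnegative traces) is then inherited verbatim from \Cref{prop:dyson}. Everything else is bookkeeping.
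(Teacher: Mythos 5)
Your proposal is correct and follows exactly the route the paper intends: the corollary is obtained from Proposition~\ref{prop:dyson} by taking $I=\{(i,j):i\neq j\}$, $L_{(i,j)}=S_i^j=R_i^j\otimes\proj{j}{i}$, using the block-diagonality of $e^{sG}$ to see that only consecutive-path label sequences survive and that on those terms $\zeta_t(\upxi)=T_t(\xi)\otimes\proj{i_n}{i_0}$. The paper leaves these bookkeeping details implicit; your write-up, including the positivity remark justifying the reindexing, fills them in accurately.
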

Note the small discrepancy between $\upxi=(i_1,\ldots,i_n;t_1,\ldots,t_n)$ in \eqref{eq:prop_dyson1} and $\xi=(i_0,\ldots,i_n;t_1,\ldots,t_n)$ in \eqref{eq:prop_dyson_CTOQW1}, where the additional index $i_0$ is due to the decomposition of $\mu$.

\begin{rmq}
Equation \eqref{eq:prop_dyson1} is also called an unravelling of the QMS. It was first introduced in \cite{DaviesBook, SriDav}, with a heuristic interpretation as the average result of trajectory $\xi=(i_0,\ldots,i_n;t_1,\ldots,t_n)$ on the state $\mu$, averaged over all possible such trajectories. We discuss later connections with an operational interpretation of $T_t(\xi)\rho(i_0)T_t(\xi)^*$ in \Cref{rmk:dyson_expansion}.
\end{rmq}

The decomposition described in \Cref{eq:prop_dyson_CTOQW2} will allow us to give a rigorous definition of the probability space associated to the evolution of the particle on $V$. The goal is to introduce the probability measure $\Pro_{\mu}$ that models the law of the position of the particle, when the initial density matrix is $\mu\in \mathcal D$. The following is inspired by \cite{Bar1,Bou1,Ja1}.

First define the set of all possible trajectories up to time $t\in[0,\infty]$ as $\Xi_t:=\underset{n\in \N}\sqcup\Xi_t^{(n)}$, where $\Xi_t^{(n)}$ is the set of trajectories on $V$ up to time $t$ comprising $n$ jumps:
\[\Xi_t^{(n)}:= \{\xi=(i_0,\ldots,i_n;t_1,\ldots,t_n)\in V^{n+1}\times\R^n, \, 0<t_1<\cdots<t_n<t\} \, .\]
For $t\in\mathbb R_+$, the set $\Xi_t^{(n)}$ is equipped with the $\sigma$-algebra $\Sigma_t^{(t)}$ and with the measure $\nu_t^{(n)}$, which is induced by the map
\[\begin{array}{clcc}I_n:\, \big(V^{n+1}\times[0,t)^n, \mathcal P(V^{n+1})\times\mathcal{B}([0,t)^n),\delta^{n+1}\times\frac{1}{n!}\lambda_n \big)&\rightarrow&\big(\Xi_t^{(n)},\Sigma_t^{(n)},\nu_t^{(n)}\big)&\,,\\
(i_0,\ldots,i_n;s_1,\ldots,s_n)&\mapsto &(i_0,\ldots,i_n;s_{\min} ,\ldots,s_{\max})
\end{array}\]
where $\delta$ is the counting measure on $V$, $\mathcal{B}([0,t)^n)$ is the Borel $\sigma$-algebra on $[0,t)^n$ and $\lambda_n$ is the Lebesgue measure on $\mathcal{B}([0,t)^n)$ for all $n\geq0$. These measures are $\sigma$-finite and this allows us to apply Carathéodory's extension Theorem. We first define the $\sigma$-algebra $\Sigma_t:=\sigma(\Sigma_t^{(t)},n\in\N)$ and the measure $\nu_t$ on $\Xi_t$ such that $\nu_t=\nu_t^{(n)}$ on $\Xi_t^{(n)}$. For a given $\mu=\sum_{i\in V}\rho(i)\otimes|i\rangle\langle i| $  in~$\Dcal$,  one can then define the probability measure $\Pro_\mu^t$ on $(\Xi_t,\Sigma_t)$ such that, for all $E\in\Sigma_t$,
\begin{align}\label{eq_prob_dyson}
\Pro_\mu^t(E)
&:=\int_{E}\Tr\big( T_t(\xi)\, \mu \,T_t(\xi)^*\big)\der\nu^t(\xi)\nonumber\\
&=\sum_{n=0}^\infty\sum_{i_0,\ldots,i_n\in V}\int_{0<t_1<\cdots<t_n<t} \ind_{\xi\in E}\, \Tr\big( T_t(\xi) \rho({i_0}) T_t(\xi)^*\big)\,\der t_{1}\cdots \der t_{n} \, ,\end{align}
where $\xi=(i_0,\ldots,i_n;t_1,\ldots,t_n)$ and where $T_t(\xi)$ is defined by Equation \eqref{eq:prop_dyson_CTOQW2}. The measure $\Pro_\mu^t$ is indeed a probability measure as one can check that $\Pro_\mu^t(\Xi_t)=\Tr\big(e^{t\Lcal}(\mu)\big)=1$. The family of probability measures $\big(\Pro_{\mu}^t\big)_{t\geq 0}$ is consistent, as \Cref{eq_prob_dyson,eq:prop_dyson_CTOQW2} show that 
\[\Pro_{\mu}^{t+s}(E) =\sum_{n=0}^\infty\sum_{i_0,\ldots,i_n\in V}\int_{0<t_1<\cdots<t_n<t} \ind_{\xi\in E}\, \Tr\big(\mathrm e^{s\Lcal} \big(T_t(\xi)\, \rho({i_0})\, T_t(\xi)^* \big)\big)\,\der t_{1}\cdots \der t_{n} =\Pro_{\mu}^{t}(E)\]
for all $t,s\geq 0$ and $E\in \Sigma_t$. Hence, Kolmogorov’s consistency Theorem allows us to extend $(\Pro_{\mu}^t)_{t\geq 0}$ to a probability measure $\Pro_{\mu}$ on $(\Xi_\infty,\Sigma_\infty)$ where $\Sigma_\infty=\sigma(\Sigma_t,t\in\mathbb R_+)$.
\smallskip

In most of our discussions below we will specialize to the case where $\mu$ is of the form $\mu=\rho\otimes\proj ii$. In such a case, we denote by $\pp_{i,\rho}$ the probability $\pp_\mu$.

\subsection{Quantum trajectories associated to CTOQW}\label{sectQtraj}
\emph{Quantum trajectories} are another convenient way to describe the distribution of the position process $(X_t)_{t\geq0}$ associated to the CTOQW. Actually, the combination of quantum trajectories and of the Dyson expansion will be essential tools for the main result of this article. Formally speaking, quantum trajectories model the evolution of the state when a continuous measurement of the position of the particle is performed. The state at time $t$ can be described by a pair $(X_t,\rho_t)$ with $X_t\in V$ the position of the particle at time $t$ (as recorded by the measuring device) and $\rho_t\in\mathcal S_{\Hb}$ the  density matrix describing the internal degrees of freedom, given by the wave function collapse postulate and thus constrained to have support on $\h_i$ alone. The stochastic process $(X_t,\rho_t)_{t\geq0}$ is then a Markov process, and this will allow us to use the standard  machinery for such processes. However, their rigorous description is less straightforward than the one for discrete time OQWs. It makes use of stochastic differential equations driven by jump processes. We refer to \cite{Pel1} for the justification of the below description and for the link between discrete and continuous-time models. Remark that we denote by the same symbol the stochastic process $(X_t)_{t\geq0}$ appearing in this and the previous section. This will be justified in \Cref{rmk:dyson_expansion} below.

In order to present the stochastic differential equation satisfied by the pair $(X_t,\rho_t)$ we need a usual filtered probability space $\big(\Omega,\mathcal F,(\mathcal F_t)_t,\mathbb P\big)$, where we consider independent Poisson point processes $N^{i,j},i,j\in V,i\neq j$ on $\mathbb R^2$. These Poisson point processes will govern the jump from site $i$ to site $j$ on the graph $V$.

\begin{defi}\label{omega}
Let $(\Tcal_t)_{t\geq0}$ be a CTOQW with Lindbladian $\Lcal$ and let $\mu=\sum_{i\in V}\rho(i)\otimes|i\rangle\langle i| $ be an initial density matrix in $\Dcal$. The quantum trajectory describing the indirect measurement of the position of the CTOQW is the Markov process $(\mu_t)_{t\geq 0}$ taking values in the set $\mathcal D$ such that 
\[\mu_0=\rho_0\otimes \vert X_0\rangle\langle X_0\vert,\]
where $X_0$ and $\rho_0$ are random with distribution
\[\pp\left((X_0,\rho_0)=\left(i,\frac{\rho(i)}{\Tr(\rho(i))}\right)\right)=\Tr(\rho(i))\ \mbox{for all } i\in V\]
and such that $\mu_t=:\rho_t\otimes\vert X_t\rangle\langle X_t\vert$  satisfies for all $t\geq0$ the following stochastic differential equation:
\begin{align}\label{qtcc}
 \mu_t=&\,  \mu_0+\int_{0}^t\Mcal(\mu_{s-}) \, \der s
 +\sum_{i\neq j}\int_{0}^{t} \int_{\R}   \left(\frac{S_i^{j}\,\mu_{s-}\,S_i^{j*}}{\Tr(S_i^{j}\mu_{s-}S_i^{j*})}-\mu_{s-}\right)\ind_{0<y<\Tr(S_i^{j}\mu_{s-}S_i^{j*})} N^{i,j}(\der y, \der s)
\end{align}
where 
\[\Mcal(\mu )=\Lcal(\mu)-\sum_{i\neq j}\big(S_i^{j}\,\mu\, S_i^{j*}-\mu\, \Tr(S_i^{j}\,\mu\, S_i^{j*})\big)\]
so that for $\mu=\sum_i\rho(i)\otimes\vert i\rangle\langle i\vert\in\mathcal D$,
\begin{equation*}
\Mcal(\mu )=\sum_i\,\big(G_i\rho(i)+\rho(i)G_i^*-\rho(i)\Tr\big(G_i\rho(i)+\rho(i)G_i^*\big)\big)\otimes\vert i\rangle\langle i\vert\, .\end{equation*} 
\end{defi}
\begin{rmq}\label{rem_CTMC}
An interesting fact has been pointed out in \cite{Pel1}: continuous-time classical Markov chains can be realized within this setup by considering $\h_i\simeq\C$ for all $i\in V$. 
\end{rmq}

Let us briefly describe the evolution of the solution $(\mu_t)_t$of \eqref{qtcc}. Assume that $X_0=i_0$ for some $i_0\in V$ and consider $\rho_0$ a state on $\h_{i_0}$. We then consider the solution, for all $t\geq0$,
\begin{equation}\label{EDO}\eta_t=\rho_0+\int_0^t \big(G_{i_0}\eta_s+\eta_sG_{i_0}^*-\eta_s\Tr(G_{i_0}\eta_s+\eta_sG_{i_0}^*)\big)\,\der s\,.
\end{equation}
We stress the fact that the solution of this equation takes value in the set of states of $\h_{i_0}$ (this nontrivial fact is well known in the theory of quantum trajectories, see \cite{Pelleg2} for further details). Now let us define the first jump time. To this end we introduce for $j\neq i_0$
\[T_1^j=\inf\big\{t\geq0\,;\,N^{i_0,j}\big(\{u,y\,\vert\, 0\leq u\leq t,\, 0\leq y\leq\Tr(R_{i_0}^j\eta_u{R_{i_0}^j}^*)\}\big)\geq1\big\}.\]
The random variables $T_1^j$ are nonatomic, and mutually independent. Therefore, if we let $T_1=\inf_{j\neq i_0}\{T_1^j\}$ then there exists a unique  $j\in V$ such that $T_1^j=T_1$. In addition,
\begin{align}
\pp \big(T_1\leq\eps\big)&\leq\sum_{j\neq i_0}\pp\big(T_1^j\leq\eps\big) \nonumber\\
&=\sum_{j\neq i_0}(1-e^{-\int_0^{\eps}\Tr(R_{i_0}^j\eta_u{R_{i_0}^j}^*)du})\nonumber\\
&\leq\sum_{j\neq i_0}\int_0^\eps \Tr(R_{i_0}^j\eta_u{R_{i_0}^j}^*)du \nonumber\\
&\leq\eps\sum_{j\neq i_0}\|{R_{i_0}^j}^*R_{i_0}^j\| \label{eq_ineqPT1}
\end{align}
where the sums are over all $j$ in $V$ with $j\neq i_0$. Now remark that our assumption that $\sum_{i,j} S_i^{j*} S_i^j$ converges strongly implies that the sum $\sum_{j\neq i} \|R_i^{j*}R_i^j\|$ is finite for all $i$ in $V$,  so that \Cref{eq_ineqPT1} implies $\pp(T_1>0)=1$. On $[0,T_1]$ we then define the solution $(X_t,\rho_t)_t$ as 
\[(X_t,\rho_t)=(i_0,\eta_t)  \ \mbox{for}\ t\in[0,T_1)\quad \mbox{and}\quad
 (X_{T_1},\rho_{T_1}) = \big(j,\frac{R_i^j\eta_{T_1-}{R_i^j}^*}{\Tr(R_i^j\eta_{T_1-}{R_i^j}^*)}\big)\, .\]
We then solve 
\begin{equation}\label{EDO}
\eta_t=\rho_{T_1}+\int_0^t \big(G_j\eta_s+\eta_sG_j^*-\eta_s\Tr(G_j\eta_s+\eta_sG_j^*)\big)\,ds\, ,
\end{equation}
and define a new jumping time $T_2$ as above. By this procedure we define an increasing sequence $(T_n)_n$ of jumping times. We show that $T:=\lim_n T_n=+\infty$ almost surely: we introduce
$$ N_t=\sum_{i\neq j}\big(\int_{0}^{t\wedge T} \int_{\R} \ind_{0<y<\Tr(R_i^{j}\rho_{s-}R_i^{j*})} \,N^{i,j}(\der y, \der s)\big)$$
(where the sum is over all $i,j$ with $i\neq j$) which counts the number of jumps before $t$. In particular $N_{T_p}=p$ for all $p\in\mathbb N$. Now from the properties of the Poisson processes we have for all $p\in\mathbb N$ and all $m\in\mathbb N$,
$$\mathbb E( N_{T_p\wedge m})\leq \mathbb E\big( N_m\big)=\sum_{i\neq j}\mathbb E\big(\int_0^{m\wedge T}\Tr(R_i^{j} \rho_{s-}R_i^{j*})\der s\big)\leq m\sum_{i\neq j}\Vert R_i^{j*}R_i^j\Vert.$$
Denoting $C=\sum_{i\neq j}\Vert R_i^{j*}R_i^j\Vert$ (which is finite) the inequality $p\,\mathbb P(T_p\leq m)\leq\mathbb E(N_{T_p\wedge m})$ implies 
$$\mathbb P(T_p\leq m)\leq\frac{m}{p}C.$$
This implies that $\mathbb P(\lim_n T_n\leq m)=0$ for all $m\in\mathbb N$ so that $\lim_n T_n=+\infty$ almost surely. Therefore, the above considerations define $(X_t,\rho_t)$ for all $t\in\R_+$.

\subsection{Connection between Dyson expansion and quantum trajectories}\label{rmk:dyson_expansion}

The connection between the process $(X_t,\rho_t)$ defined in this section and the Dyson expansion has been deeply studied in the literature. We do not give all the details of this construction and instead refer to \cite{Bar1,Bou1} for a complete and rigorous justification. The main point is that the process $(X_t,\rho_t)$ defined in \Cref{sectQtraj} can be constructed explicitly on the space $(\Xi^\infty,\Sigma^\infty,\Pro)$, as we now detail.

Recall the interpretation of $\xi=(i_0,\ldots,i_n;t_1,\ldots,t_n)$ as the trajectory of a particle, initially at $i_0$ and jumping to $i_k$ at time $t_k$. First, on $(\Xi^\infty,\Sigma^\infty,\Pro)$ define the random variable $\tilde N^{i,j}_t$ by
\begin{equation*}
	\tilde N^{i,j}_t(\xi)=\mathrm{card}\big\{k=0,\ldots,n-1\,|\, (i_k,i_{k+1})=(i,j)\big\}
\end{equation*}
for $\xi=(i_0,\ldots,i_n;t_1,\ldots,t_n)$ as above. Now, let
\begin{equation} \label{eq:defXttrhott}
\begin{aligned}
	\tilde X_t(\xi)&=\left\{\begin{array}{cl}
i_k & \mbox{ if } t_k\leq t < t_{k+1}\\
i_n & \mbox{ if } t_n\leq t.\end{array}\right. \\
	\tilde \rho_t(\xi)&= \frac{T_t(\xi)\rho_0\, T_t(\xi)^*}{\Tr({T_t(\xi)\rho_{0} T_t(\xi)^*})}
\end{aligned}
\end{equation}
(recall that $T_t(\xi)$ is defined in \Cref{eq:prop_dyson_CTOQW2}) and 
\begin{equation*}
	\tilde\mu_t=\tilde\rho_t\otimes\ketbra{\tilde X_t}{\tilde X_t} \, .
\end{equation*}
Differentiating \Cref{eq:defXttrhott}, one can show that the process $(\tilde{\mu_t})_t$ satisfies
\begin{align}\label{qtcc2}
\der \tilde \mu_t=&\, \Mcal(\tilde \mu_{t-}) \ \der t
 +\sum_{i\neq j} \big(\frac{S_i^{j}\tilde \mu_{s-}S_i^{j*}}{\Tr(S_i^{j}\tilde \mu_{t-}S_i^{j*})}-\tilde \mu_{t-}\big) \,\der\tilde N^{i,j}(t).
\end{align}
It is proved in \cite{Bar1} that the processes
\[(\tilde{N}^{i,j}_t)_t\quad\mbox{and}\quad \big(\int_{0}^{t} \int_{\R} \ind_{0<y<\Tr(R_i^{j}\rho_{s-}R_i^{j*})} N^{i,j}(\der y, \der s)\big)_t\]
(for $(\rho_t)_t$ and $N^{i,j}$ defined in the previous section) have the same distribution. Therefore, $(\tilde\mu_t)_t$ and $(\mu_t)_t$ have the same distribution. For this reason, we will denote the random variables $\tilde\eta_t$, $\tilde X_t$, $\tilde\rho_t$ by $\eta_t$, $X_t$, $\rho_t$, i.e.\ we identify the random variables obtained by the construction in \Cref{sectQtraj} and those defined by \Cref{eq:defXttrhott}. In addition, from expression \eqref{eq:defXttrhott} for $\rho_t$, $X_t$ we recover immediately that $\mu_t=\rho_t\otimes \proj{X_t}{X_t}$ satisfies 
\[\ \E_{\mu_0}(\mu_t)=\Tcal_t(\mu_0)\]
where $\E_{\mu_0}$ is the expectation with respect to the probability $\pp_{\mu_0}$ defined in \Cref{sect:Dyson}. This identity shows that the quantum Markov semigroup $(\Tcal_t)_t$ plays for the process $(X_t,\rho_t)_t$ the same role as the Markov semigroup in the classical case. Because a notion of irreducibility is naturally associated to such a semigroup (see \cite{Dav,FagReb2} for general considerations on the irreducibility of Lindbladians), this will allow us to associate a notion of irreducibility to a continuous-time open quantum walk.

Now note that expressions \eqref{eq:defXttrhott} give an interpretation of $X_t$ and $\rho_t$ in terms of quantum measurement. Indeed, one can see the operator $T_t(\xi)$ for $\xi=(i_0,\ldots,i_n;t_1,\ldots,t_n)$ (or, rather, the map $\rho\mapsto T_t(\xi)\rho T_t(\xi)^*$) as describing the effect of the trajectory where jumps (up to time $t$) occur at times $t_1$,\ldots,$t_n$ and $i_0$,\ldots,$i_n$ is the sequence of updated positions: as long as the particle sits at $i_k\in V$, the evolution of its internal degrees of freedom is given by the semigroup of contraction $(e^{t\,G_{i_k}})_{t\geq0}$ and, as the particle jumps to $i_{k+1}$, it undergoes an instantaneous transformation governed by $R_{i_k}^{i_{k+1}}$ (this $T_t(\xi)$ as the analogue for continuous-time OQW of the operator $L_\pi$ of \cite{Pa1,Pa3}). Therefore, the expression for $\rho_t(\xi)$ in \Cref{eq:defXttrhott} encodes the effect of the reduction postulate, or postulate of the collapse of the wave function, on the state of a particle initially at $i_0$ and with internal state $\rho_0$.

This rigorous connection of the unravelling \eqref{eq:prop_dyson_CTOQW2} to (indirect) measurement was first described in \cite{Bar1} (see also \cite{Pelleg1,Pelleg2}, as well as \cite{DdRM} for a connection to two-time measurement statistics).


\smallskip
To summarize this section and the preceding one, we have defined a Markov process $(\mu_t)_t$ as $\mu_t=\rho_t\otimes \ketbra{X_t}{X_t}$, where $X_t\in V$ and $\rho_t\in \mathcal S_{\mathfrak h_{X_t}}$, of which the law can be computed in two ways: either by the Dyson expansion of the CTOQW as in \Cref{eq_prob_dyson} or by use of the stochastic differential equation \eqref{qtcc}. 

\section{Irreducibility of quantum Markov semigroups}
In this section, we state the equivalence between different notions of irreducibility for general quantum Markov semigroup. Our main motivation is the fact that we could not find a complete proof in the case of an infinite-dimensional Hilbert space, as is required e.g.\ for CTOQW with infinite $V$. We then discuss irreducibility for CTOQW.

\begin{thm}\label{irr}
Let $\mathcal{T}:=(\Tcal_t )_{t\geq 0}$ be a quantum Markov semigroup with Lindbladian \begin{equation}\label{lindblad}
\Lcal(\mu)=G\mu+\mu G^*+\displaystyle{\sum_{i\in I}} {L_i}\,\mu\,L_i^* \,.
\end{equation} The following assertions are equivalent:
\begin{enumerate}
\item $\mathcal{T}$ is \emph{positivity improving}: for all $A\in\mathcal I_1(\Kb)$ with  $A\geq 0$ and $A\neq 0$, there exists $t>0$ such that $e^{t\Lcal}(A)>0$.
\item For any $\varphi\in\Kb \backslash\{0\}$, the set $\C[\Lcal]\,\varphi$ is dense in $\Kb$ where $\C[\Lcal]$ is the set of polynomials in $e^{tG}$ for $t>0$ and in $L_i$ for $i\in I$.
\item For any $\varphi\in\Kb \backslash\{0\}$, the set $\C[G,L]\,\varphi$ is dense in $\Kb$ where $\C[G,L]$ is the set of polynomials in $G$ and in the $L_i$ for $i\in I$.
\item $\mathcal{T}$ is irreducible, i.e.\ there exists $t>0$ such that $\Tcal_t$ admits no non-trivial projection $P\in\Bb(\Kb)$ with $\Tcal_t\big(P\mathcal I_1(\Kb) P\big)\subset P\mathcal I_1(\Kb) P $.
  \end{enumerate}
\end{thm}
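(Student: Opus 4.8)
The strategy is to prove the chain of implications $(1)\Rightarrow(2)\Rightarrow(3)\Rightarrow(4)\Rightarrow(1)$, with the analytic heart lying in passing between the semigroup-level statements $(1),(4)$ and the generator-level statements $(2),(3)$; everything else is a matter of carefully transcribing the classical (finite-dimensional) arguments of Davies and Fagnola--Rebolledo into a form that survives in infinite dimension. I would first set up, for $\mu = \ketbra{\varphi}{\varphi}$, the Dyson expansion of \Cref{prop:dyson}: it displays $e^{t\Lcal}(\ketbra{\varphi}{\varphi})$ as an integral over trajectories of rank-one positive operators $\ketbra{\zeta_t(\upxi)\varphi}{\zeta_t(\upxi)\varphi}$. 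The key observation is that for a nonzero vector $\psi$, one has $\psh{\psi}{e^{t\Lcal}(\ketbra{\varphi}{\varphi})\psi} = 0$ if and only if $\psh{\psi}{\zeta_t(\upxi)\varphi} = 0$ for (almost, hence by continuity in the $t_k$, every) trajectory $\upxi$, i.e.\ $\psi \perp e^{(t-t_n)G}L_{i_n}\cdots L_{i_1}e^{t_1 G}\varphi$ for all choices of indices and times. This is the bridge between ``$e^{t\Lcal}(A)$ has a kernel direction $\psi$'' and ``$\psi \perp \C[\Lcal]\varphi$''.

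For $(1)\Rightarrow(2)$: if $\C[\Lcal]\varphi$ is not dense, pick $\psi\neq 0$ orthogonal to its closure; then by the observation above $\psh{\psi}{e^{t\Lcal}(\ketbra{\varphi}{\varphi})\psi}=0$ for all $t>0$, so $e^{t\Lcal}(\ketbra{\varphi}{\varphi})$ is never strictly positive, contradicting $(1)$ applied to $A=\ketbra{\varphi}{\varphi}$. For $(2)\Leftrightarrow(3)$: this is the purely analytic step and, I expect, the main obstacle. One direction is easy — $\C[G,L]\varphi \subseteq \overline{\C[\Lcal]\varphi}$ since $G = \lim_{t\to 0}(e^{tG}-\id)/t$ in norm (as $G$ is bounded), so each monomial in $G$ is a norm-limit of polynomials in the $e^{tG}$. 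The reverse inclusion at the level of closed invariant subspaces is the delicate one: one shows that the closed subspace $\Kb_0 := \overline{\C[G,L]\varphi}$ is invariant under every $e^{tG}$ (again using boundedness of $G$, so $e^{tG}$ is a norm-convergent power series in $G$ that preserves $\Kb_0$) and under each $L_i$, hence under all of $\C[\Lcal]$; since $\varphi\in\Kb_0$ we get $\overline{\C[\Lcal]\varphi}\subseteq\Kb_0$, and combined with the trivial inclusion this gives $\overline{\C[\Lcal]\varphi} = \Kb_0$. So $(2)$ and $(3)$ assert density of the \emph{same} subspace. The only subtlety is that ``$\varphi$ cyclic for the $*$-algebra-like set $\C[G,L]$ of \emph{non-self-adjoint} polynomials'' is genuinely weaker than cyclicity for a von Neumann algebra, so one must not invoke bicommutant-type arguments; the power-series identity for $e^{tG}$ is what makes it work, and this is exactly why boundedness of $\Lcal$ (uniform continuity of the QMS) is essential and why the infinite-dimensional case needs care.

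For $(3)\Rightarrow(4)$: suppose $P\neq 0,\id$ is a subharmonic projection, i.e.\ $\Tcal_t(P\Ical_1(\Kb)P)\subseteq P\Ical_1(\Kb)P$ for some (hence, by the semigroup property and a standard argument, all) $t>0$. A classical computation — differentiating at $t=0$, legitimate since $\Lcal$ is bounded — shows that $\ran P$ is then invariant under $G$ and under every $L_i$: indeed $P^\perp G P = 0$ and $P^\perp L_i P = 0$. Taking $\varphi \in \ran P\setminus\{0\}$, the whole orbit $\C[G,L]\varphi$ stays in $\ran P$, which is a proper closed subspace, contradicting $(3)$. Finally $(4)\Rightarrow(1)$: contrapositively, if some $e^{t\Lcal}(A)$ fails to be strictly positive for a nonzero $A\geq 0$, then (reducing first to $A=\ketbra{\varphi}{\varphi}$, since any such $A$ dominates a rank-one projection and $e^{t\Lcal}$ is positive and order-preserving) the support projection of $\int_0^1 e^{s\Lcal}(\ketbra{\varphi}{\varphi})\,\der s$ — call it $P$ — is a proper nonzero projection, and one checks via the Dyson expansion / the invariance of $\overline{\C[\Lcal]\varphi}$ that $\ran P = \overline{\C[\Lcal]\varphi}$ is invariant under $G$ and the $L_i$, whence $P^\perp\Tcal_t(P\,\cdot\,P)P^\perp = 0$, i.e.\ $P$ is subharmonic, violating irreducibility. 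The recurring technical point throughout — and the one I would budget the most effort for — is justifying the interchange of ``vanishing of a trace-class positive operator sandwiched by $\psi$'' with ``pointwise vanishing inside the Dyson integral'', which rests on the integrand being a continuous nonnegative function of the jump times together with Fubini/Tonelli for the $\sigma$-finite trajectory measures introduced in \Cref{sect:Dyson}.
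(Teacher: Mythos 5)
Your implications $(1)\Rightarrow(2)$, $(2)\Leftrightarrow(3)$ and $(4)\Rightarrow(1)$ are sound and essentially identical to the paper's argument: the Dyson expansion turns strict positivity of $e^{t\Lcal}(\ketbra{\varphi}{\varphi})$ into density of $\C[\Lcal]\varphi$; the norm-convergent power series for $e^{tG}$ and the difference quotient $(e^{tG}-\id)/t\to G$ identify $\overline{\C[\Lcal]\varphi}$ with $\overline{\C[G,L]\varphi}$; and the projection onto $\overline{\C[\Lcal]\varphi}$ is subharmonic because that subspace is invariant under every $\zeta_t(\upxi)$. The genuine gap is in your $(3)\Rightarrow(4)$: you assert that a projection $P$ subharmonic for $\Tcal_{t_0}$ for \emph{some} $t_0>0$ is subharmonic for \emph{all} $t>0$ ``by the semigroup property and a standard argument'', and then differentiate at $t=0$. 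The semigroup property only propagates subharmonicity from $t_0$ to its integer multiples, and the claim is false in general: take $\Lcal(\mu)=-\ii[H,\mu]$ with no jump operators and $H$ with spectrum in $\frac{2\pi}{t_0}\Z$, so that $\Tcal_{t_0}=\id$ and \emph{every} projection is subharmonic for $\Tcal_{t_0}$, while only the spectral projections of $H$ are subharmonic for all $t$. Moreover, the negation of $(4)$ only provides, for each $t$, some nontrivial subharmonic projection $P_t$ that may depend on $t$, so there is no single $P$ to differentiate on a neighbourhood of $0$. Note also that the paper's remark derives the ``for all $t$'' upgrade of assertion $4$ as a \emph{consequence} of the theorem (via the $t$-independence of assertion $3$), so invoking it inside the proof is circular.

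The repair is what the paper does: prove $(4)\Rightarrow(2)$ instead of $(3)\Rightarrow(4)$. Given the single $t_0$ furnished by $(4)$ and $\varphi\neq 0$, the projection $P$ onto $\overline{\C[\Lcal]\varphi}$ satisfies $\zeta_t(\upxi)P\psi\in\overline{\C[\Lcal]\varphi}$ for every trajectory, hence by the Dyson expansion $\Tcal_t(P\ketbra{\psi}{\psi}P)=P\,\Tcal_t(P\ketbra{\psi}{\psi}P)\,P$ for \emph{every} $t$, in particular for $t_0$; irreducibility of $\Tcal_{t_0}$ then forces $P=\id$. Alternatively, you can close your cycle without $(3)\Rightarrow(4)$ at all: your $(1)\Leftrightarrow(2)\Leftrightarrow(3)$ shows that positivity improvement, once it holds in the sense of $(1)$, holds for every $t$, and a positivity improving $\Tcal_{t_0}$ obviously admits no nontrivial subharmonic projection --- this is the paper's ``obvious'' $(1)\Rightarrow(4)$.
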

From now on, any quantum Markov semigroup which satisfies any one of the equivalent statements of Theorem \ref{irr} is simply called \emph{irreducible}.

\begin{rmq}
Positivity improving maps are also called primitive. We therefore call \emph{primitivity} the property of being positivity improving. Remark also that one can replace ``there exists $t>0$'' by ``for all $t>0$'' in assertions $1.$ and $4.$ above to get another equivalent formulation of irreducibility and primitivity. This follows from the observation that assertion $3.$ does not depend of $t$.
\end{rmq}

\begin{proof}
We first prove the equivalence $1.\Leftrightarrow 2.$ Note that $1.$\ holds if and only if for every $\varphi_0\neq 0$, there exists $t_0>0$ such that $\langle \varphi,e^{t\Lcal}(\ketbra{\varphi_0}{\varphi_0})\varphi\rangle >0$ for all $\varphi\neq 0$. Now remark that from Equation \eqref{eq:prop_dyson_CTOQW1},
\begin{align} \label{eq_preuveirreductibilite12}
\langle \varphi,e^{t\Lcal}(\ketbra{\varphi_0}{\varphi_0})\varphi\rangle  
=\sum_{n=0}^\infty\sum_{i_0,\ldots,i_n\in I}\int_{0<t_1<\cdots<t_n<t} |\langle\varphi,\zeta_t(\upxi)\varphi_0\rangle|^2\,\der t_{1}\cdots \der t_{n}
\end{align}
where $\upxi=(i_1,\ldots,i_n;t_1,\ldots,t_n)$. Assume $1.$\ and fix $\varphi_0\neq 0$. If for some $t\geq0$, the left-hand-side of \eqref{eq_preuveirreductibilite12} is positive for any $\varphi\neq 0$, then for any such $\varphi\neq 0$ there exists $\upxi$ with $\braket \varphi{\zeta_t(\upxi)\varphi_0}\neq 0$. Since $\zeta_t(\upxi)\varphi_0\in\C[\mathcal L]\varphi_0$ and the latter is a vector space, this implies that $\C[\mathcal L]\varphi_0$ is dense in $\mathcal K$. Now assume $2.$ and fix $\varphi_0\neq 0$. Since $\C[\mathcal L]\varphi_0$ is dense in $\mathcal K$, for any $\varphi\neq0$ there exists an element $\psi=e^{s_n\,G}\,L_{i_n}\,\cdots\,L_{i_1}\,e^{s_1\,G} \varphi_0$ such that $\braket \varphi\psi\neq 0$. However, for $t\geq s_1+\ldots+s_n$, $\psi$ is of the form $\zeta_t(\upxi)\varphi_0$ for some $\upxi=(i_1,\ldots,i_n;t_1,\ldots,t_n)$. By continuity of $\zeta$ in $t_1,\ldots,t_n$, the right-hand side of \eqref{eq_preuveirreductibilite12} is positive and this proves $1$.

To prove the equivalence $2.\Leftrightarrow 3. $, we use the fact that $G=\lim_{t \to 0}(\mathrm e^{tG}-\id)/t$, which implies that for any $\varphi\in\Kcal \backslash\{0\}$, 
\[\C[G,L]\varphi\subset\overline{\C[\Lcal]}\varphi\subset\overline{\C[\Lcal]\varphi} \, .\]
Since $e^{tG} =\lim_{n \to \infty}\sum_{k=0}^{n}{t^k G^k}/{k!}$, for any $\varphi\in\Kcal \backslash\{0\}$ we also have
\[\C[\Lcal]\varphi\subset\overline{\C[G,L]}\varphi\subset\overline{\C[G,L]\varphi} \, .\]
Therefore, for any $\varphi\in\Kcal \backslash\{0\}$,  
\begin{equation}\label{22}
\C[\Lcal]\varphi \text{ is dense in } \Kcal\Longleftrightarrow  \C[G,L]\varphi \text{ is dense in } \Kcal \, .
\end{equation}

The implication $1.\Rightarrow 4. $ is obvious. It remains to prove that $4.\Rightarrow 2. $. To this end, suppose that $\mathcal{T}$ is irreducible. Let $\varphi\in \Kb\backslash\{0\}$ and denote by $P$ the orthogonal projection on $\overline{\C[\Lcal]\varphi}$. The goal is to prove that $P=\id$. For all $\psi\in\Kb\backslash\{0\}$, 
\begin{align*}
e^{t\Lcal}(P\vert \psi\rangle\langle \psi\vert P)&=\sum_{n=0}^\infty\sum_{i_0,\ldots,i_n\in I}\int_{0<t_1<\cdots<t_n<t}\zeta_t(\upxi)P \vert \psi\rangle\langle \psi\vert P\zeta_t(\upxi)^*\der t_{1}\cdots \der t_{n}\\
&=\sum_{n=0}^\infty\sum_{i_0,\ldots,i_n\in I}\int_{0<t_1<\cdots<t_n<t} \vert \zeta_t(\upxi)P \psi\rangle\langle \zeta_t(\upxi)P\psi\vert \der t_{1}\cdots \der t_{n}\, , \end{align*}
Since $\zeta_t(\upxi)\in\C[\Lcal]$ and $P \psi\in\overline{\C[\Lcal]\varphi}$, we have $\zeta_t(\upxi)P \psi\in\overline{\C[\Lcal]\varphi}$ and thus
\[\Tcal_t(P\vert \psi\rangle\langle \psi\vert P)=P\,\Tcal_t(P\vert \psi\rangle\langle \psi\vert P)P \, .\]
The projection P being subharmonic for $\Tcal_t$ which is irreducible by assumption, $P$ is trivial. As it is non-zero, $P=\id$. Since $P$ is the orthogonal projection on $\overline{\C[\Lcal]\varphi}$, this shows that $\C[\Lcal]\varphi$ is dense in $\Kb$. \end{proof}
\begin{rmq} \label{rmq_irrLLstar}
An immediate corollary of \Cref{irr} is that a quantum Markov semigroup $\Tcal=(\Tcal_t)_t$ is irreducible if and only if its adjoint $\Tcal^*=(\Tcal^*_t)_t$ is irreducible.
\end{rmq}

We now introduce the notion of irreducibility of a CTOQW, focusing on the trajectorial formulation. Let $\mathcal{T}:=(\Tcal_t )_{t\geq 0}$ be a CTOQW on a set $V$. For $i,j$ in $V$ and $n\in\N$, we denote by $\Pcal^n(i,j)$  the set of continuous-time trajectories going from $i$ to $j$ in $n$ jumps:
\[\mathcal{P}^n(i,j)=\{\xi=(i_0,\ldots,i_n;t_1,\ldots,t_n)\in \Xi^{(n)}_{\infty}\,|\, i_0=i, \, i_n=j\} \, \] 
and we set $\mathcal{P}(i,j)=\cup_{n\in\N}\mathcal{P}^n(i,j)$. For any $\xi=(i,\ldots,j;t_1,\ldots,t_n)$ in $\mathcal{P}(i,j)$, we denote by $R(\xi)$ the operator from $\h_i$ to $\h_j$ defined by
\begin{equation}\label{eq_traj}
R(\xi)=R_{i_{n-1}}^{j}e^{(t_{n}-t_{n-1}) G_{i_{n-1}}}R_{i_{n-2}}^{i_{n-1}}  \,  \cdots \,e^{(t_{2}-t_{1}) G_{i_{1}}}R_{i}^{i_1} e^{t_{1} G_{i}}\, .
\end{equation}
This is almost the same as the operator $T_t(\xi)$ defined in Equation \eqref{eq:prop_dyson_CTOQW2} but here we do not include the evolution between the time $t_n$ and $t$, that is,
\[T_t(\xi)=e^{(t-t_n)\,G_j}\,R(\xi)\,.\]
The following proposition is a direct application of \Cref{irr}, and will constitute our definition of irreducibility for continuous-time open quantum walks. The criterion here is equivalent to any other formulation proposed in Theorem \ref{irr}.

\begin{prop}\label{posimppath}
The CTOQW defined by the quantum Markov semigroup $\mathcal{T}:=(\Tcal_t )_{t\geq 0}$ is \emph{irreducible} if and only if, for every $i$
and $j$ in $V$ and for any $\varphi$ in $ \h_i\backslash\{0\}$, the set $\{R(\xi)\, \varphi,\, \xi\in\mathcal{P}(i,j)\} $ is total in $ \h_j$.
\end{prop}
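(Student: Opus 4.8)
The statement is essentially a translation of the equivalent characterization of irreducibility in \Cref{irr} (part 2, the density of $\C[\Lcal]\varphi$) from the big Hilbert space $\Hcal=\bigoplus_i\h_i\otimes|i\rangle$ into the block language of CTOQW. So the plan is: apply \Cref{irr} to the Lindbladian of the CTOQW, written in its alternative Lindblad form \eqref{eq_lind_alternative}, with the identification of the jump operators $L_{(i,j)}=S_i^j=R_i^j\otimes|j\rangle\langle i|$ and the drift generator $G=\sum_i G_i\otimes|i\rangle\langle i|$, and then unpack what the density of $\C[\Lcal]\varphi$ means block by block.

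First I would fix $i\in V$ and $\varphi\in\h_i\setminus\{0\}$, and view $\varphi$ as the vector $\varphi\otimes|i\rangle\in\Hcal$. By \Cref{irr} (equivalence $1.\Leftrightarrow 2.$ applied to the CTOQW), irreducibility is equivalent to: for every such $i,\varphi$, the set $\C[\Lcal](\varphi\otimes|i\rangle)$ is dense in $\Hcal$. Now I would compute a generic element of $\C[\Lcal](\varphi\otimes|i\rangle)$: it is a finite product of operators each of which is either $e^{tG}$ (for some $t>0$) or one of the $S_k^\ell$, applied to $\varphi\otimes|i\rangle$. Because $e^{tG}=\sum_k e^{tG_k}\otimes|k\rangle\langle k|$ preserves each block $\h_k\otimes|k\rangle$, and $S_k^\ell=R_k^\ell\otimes|\ell\rangle\langle k|$ maps the block at $k$ into the block at $\ell$, such a product applied to $\varphi\otimes|i\rangle$ is nonzero only if the sequence of jump labels $(k_1,\ell_1),(k_2,\ell_2),\ldots$ chains up as $i=k_1$, $\ell_1=k_2$, \ldots, i.e.\ it traces a path $i=i_0\to i_1\to\cdots\to i_n=j$ in $V$, and in that case the result is exactly $R(\xi)\varphi\otimes|j\rangle$ for the trajectory $\xi=(i_0,\ldots,i_n;t_1,\ldots,t_n)$ built from the successive times $t_1<t_1+t_2<\cdots$, with $R(\xi)$ as in \eqref{eq_traj}. (The only bookkeeping point is matching the increments $t_{m}-t_{m-1}$ appearing in \eqref{eq_traj} with the free times appearing in successive $e^{tG}$ factors; since all positive times are allowed, this is a harmless reparametrization, exactly as in the argument for $2.\Rightarrow 1.$ in the proof of \Cref{irr}.)

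Consequently $\C[\Lcal](\varphi\otimes|i\rangle)=\bigoplus_{j\in V}\mathrm{span}\{R(\xi)\varphi\,;\,\xi\in\Pcal(i,j)\}\otimes|j\rangle$ as a (non-closed) subspace of $\Hcal=\bigoplus_j\h_j\otimes|j\rangle$. Since the orthogonal decomposition $\Hcal=\bigoplus_j\h_j\otimes|j\rangle$ is preserved, density of this subspace in $\Hcal$ is equivalent to density, for each fixed $j\in V$, of $\mathrm{span}\{R(\xi)\varphi\,;\,\xi\in\Pcal(i,j)\}$ in $\h_j$ — that is, to totality of $\{R(\xi)\varphi\,;\,\xi\in\Pcal(i,j)\}$ in $\h_j$. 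Combining with \Cref{irr}, the CTOQW is irreducible iff for all $i,j\in V$ and all $\varphi\in\h_i\setminus\{0\}$, the set $\{R(\xi)\varphi\,;\,\xi\in\Pcal(i,j)\}$ is total in $\h_j$, which is the claim.

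**Main obstacle.** The only genuinely delicate point is the faithful translation of ``finite product of $e^{tG}$'s and $S_k^\ell$'s applied to $\varphi\otimes|i\rangle$'' into ``$R(\xi)\varphi\otimes|j\rangle$ over trajectories $\xi$'', and in particular making sure that \emph{all} elements of $\C[\Lcal]\varphi$ are captured (not just those in the ``standard form'' with alternating single $e^{tG}$ and single $S_k^\ell$). This is handled by the semigroup property $e^{sG}e^{tG}=e^{(s+t)G}$ (to merge consecutive exponentials), the convention $S_k^k=0$ (to kill non-chaining terms), and the remark in \Cref{irr} that the density in statement $2.$ does not depend on the particular times $t>0$ chosen — so one may always pad or split the time parameters to realize a given product as a genuine trajectory operator $R(\xi)$. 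Once this identification is made cleanly, the block-diagonal bookkeeping and the reduction to ``totality in each $\h_j$'' are immediate, and no further estimates are needed.
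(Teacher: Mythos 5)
Your overall strategy --- unpacking condition 2 of \Cref{irr} block by block using $e^{tG}=\sum_k e^{tG_k}\otimes\ketbra{k}{k}$ and $S_k^\ell=R_k^\ell\otimes\ketbra{\ell}{k}$ --- is the intended one, and it does give one implication cleanly: each $R(\xi)\varphi\otimes|j\rangle$ is literally a monomial of $\C[\Lcal]$ applied to $\varphi\otimes|i\rangle$, so totality of $\{R(\xi)\varphi\}$ in every $\h_j$ yields density of $\C[\Lcal](\varphi\otimes|i\rangle)$, hence irreducibility. The gap is in the converse, and it sits exactly in your identity $\C[\Lcal](\varphi\otimes|i\rangle)=\bigoplus_j\mathrm{span}\{R(\xi)\varphi\}\otimes|j\rangle$, which is not correct: a monomial of $\C[\Lcal]$ may have a \emph{leading} factor $e^{sG}$, i.e.\ a free evolution at the destination site after the last jump, and applied to $\varphi\otimes|i\rangle$ it produces $e^{sG_j}R(\xi)\varphi\otimes|j\rangle=T_{t_n+s}(\xi)\varphi\otimes|j\rangle$. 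This vector is not of the form $R(\xi')\varphi\otimes|j\rangle$ for any trajectory $\xi'$, since every $R(\xi')$ with at least one jump has the jump operator $R_{i_{n-1}}^{j}$ as its leftmost factor; no padding or splitting of the jump times can manufacture the trailing $e^{sG_j}$. Your reparametrization remark handles merging and inserting \emph{interior} exponentials, but not this one. What irreducibility actually gives you is totality of $\{e^{sG_j}R(\xi)\varphi:\ s\geq0,\ \xi\in\Pcal(i,j)\}$, and to pass to totality of $\{R(\xi)\varphi\}$ you would still need the closed span of the latter set to be invariant under the semigroup $(e^{sG_j})_{s\geq0}$ --- which is not automatic and is the real content of this direction.

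To see that the discrepancy is not cosmetic, note that if $j$ has a single incoming edge from $k$, then every $R(\xi)\varphi$ with $\xi\in\Pcal(i,j)$, $i\neq j$, lies in $\overline{\ran R_k^j}$, whereas $e^{sG_j}$ can move this subspace so that $\bigcup_{s\geq0}e^{sG_j}\,\overline{\ran R_k^j}$ is total even though $\overline{\ran R_k^j}$ is proper: take $V=\{1,2\}$, $\h_1=\C$, $\h_2=\C^2$, $R_1^2$ of rank one and $G_2$ not leaving $\ran R_1^2$ invariant; such a walk is positivity improving while $\{R(\xi)\varphi:\xi\in\Pcal(1,2)\}$ spans only the line $\ran R_1^2$. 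So the converse implication cannot be obtained by block bookkeeping alone: you must either establish the missing invariance of $\overline{\mathrm{span}}\{R(\xi)\varphi\}$ under $(e^{sG_j})_{s\geq0}$ --- for instance by a duality argument through the adjoint semigroup and \Cref{rmq_irrLLstar}, using that $R(\xi)^*$ is a path operator of the adjoint walk along the reversed path --- or recognize that the clean equivalence is with the operators $T_t(\xi)$ rather than $R(\xi)$. As written, your proposal asserts this step rather than proving it.
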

The proposition below gives a sufficient condition for irreducibility of a quantum Markov semigroup. We recall (see \cite{schrader}) that a map on $\mathcal I_1(\mathcal K)$ of the form $\mu\mapsto \sum_i L_i \mu L_i^*$ is called irreducible if for any $\varphi_0\neq 0$, the set $\C[L]\varphi_0$ is dense in $\mathcal K$ (this is simply a discrete-time analogue of the present notion of irreducibility).
\begin{prop}
Let $\mathcal{T}:=(\Tcal_t )_{t\geq 0}$ be a quantum Markov semigroup with Lindbladian $$\Lin(\mu)=G\mu+\mu G^*+\displaystyle{\sum_{i}} {L_i}\,\mu\,L_i^* \, .$$ If $\Phi(\mu)=\sum_{i} {L_i}\,\mu\,L_i^*$ is irreducible, then  $\mathcal{T}:=(\Tcal_t)_{t\geq 0}$ is irreducible as well.
\end{prop}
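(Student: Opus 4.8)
The plan is to invoke \Cref{irr}, which converts irreducibility of the quantum Markov semigroup into a purely algebraic density statement and thereby makes the implication almost immediate. Concretely, by the equivalence $3.\Leftrightarrow 4.$ in \Cref{irr} (together with the remark noting that the choice of $t$ is immaterial), the semigroup $\mathcal T=(\Tcal_t)_{t\geq 0}$ is irreducible if and only if, for every $\varphi\in\mathcal K\setminus\{0\}$, the set $\C[G,L]\varphi$ is dense in $\mathcal K$, where $\C[G,L]$ denotes the set of polynomials in $G$ and in the $L_i$. So it suffices to establish this density under the hypothesis that $\Phi(\mu)=\sum_i L_i\mu L_i^*$ is irreducible.

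The single observation needed is the trivial inclusion of operator sets $\C[L]\subset\C[G,L]$: every polynomial in the $L_i$ is in particular a polynomial in $G$ and the $L_i$. Consequently, for any fixed $\varphi\neq 0$ we have $\C[L]\varphi\subset\C[G,L]\varphi$, hence $\overline{\C[L]\varphi}\subset\overline{\C[G,L]\varphi}$. By definition, irreducibility of $\Phi$ means exactly that $\C[L]\varphi$ is dense in $\mathcal K$ for every $\varphi\neq 0$, so the left-hand side equals $\mathcal K$, which forces $\overline{\C[G,L]\varphi}=\mathcal K$ as well. Invoking the equivalence in \Cref{irr} once more then yields that $\mathcal T$ is irreducible. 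There is essentially no obstacle to overcome here; the statement is a bookkeeping consequence of the characterization in \Cref{irr}. The only point requiring (minor) care is that the \emph{same} family $(L_i)_{i}$ occurs in the Lindbladian $\Lin$ and in $\Phi$, so that the inclusion $\C[L]\subset\C[G,L]$ of polynomial sets is literally valid; were one to change the Lindblad representation, one would simply repeat the argument with the new operators, since \Cref{irr} does not depend on the chosen representation.
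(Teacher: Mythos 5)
Your proof is correct and follows exactly the route the paper takes: it combines the definition of irreducibility of $\Phi$ (density of $\C[L]\varphi$ for all $\varphi\neq 0$, as recalled from \cite{schrader}) with the third characterization in \Cref{irr}, via the trivial inclusion $\C[L]\varphi\subset\C[G,L]\varphi$. The paper's own proof is a one-line appeal to the same two ingredients, so your writeup is simply a more explicit version of the intended argument.
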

\begin{proof}
This is obvious from Lemma 3.3 of \cite{schrader} and the third characterization of irreducibility in Theorem \ref{irr}.
\end{proof}

One can compare the present notion of irreducibility with the usual one for classical Markov chain. Actually if $Q=(q_{i,j})_{i,j\in V}$ denotes the generator of a continuous-time Markov chain, then the irreducibility of the chain depends on the transitions $(q_{i,j},i\neq j)$. In particular the Markov chain is irreducible if and only if for all $i,j$ there exists a path $i_0=i,i_1,\ldots,i_n=j$ such that $q_{i_0,i_1}\times\ldots\times q_{i_{n-1},i_n}>0$. Unfortunately in the case of CTOQW only one implication is valid since one can find $\mathcal{T}:=(\Tcal_t)_{t\geq 0}$ irreducible where the corresponding $\Phi$ is not irreducible (see the example below). 
\begin{exe} We focus on an example on $\mathcal \Hb= \C^2\otimes \vert 1\rangle+\C^2\otimes \vert 2\rangle$, where the Lindbladian is defined by \eqref{eq_lind_alternative} with: 
\[G_1=G_2=
	\frac{1}{2}\begin{pmatrix} -1&2\\-2&-1 \end{pmatrix},
\ R_1^2=R_2^1=
	\begin{pmatrix}	0&1\\1&0\end{pmatrix}
\, .\]
One can easily check that $\{R(\xi)\, \varphi,\, \xi\in\mathcal{P}(i,j)\}=\h_j $ for all $i,j\in\{1,2\}$ and $\varphi\in\h_i\setminus\{0\}$ but the discrete OQW defined by $R_1^2$ and $R_2^1$ is not irreducible. This is an example of CTOQW where $e^{t\Lin}$ is irreducible even though $\Phi$ is not. 
					 
					 
\end{exe}

\section{Transience and recurrence of irreducible CTOQW}

In the classical theory of Markov chains on a finite or countable graph, an irreducible Markov chain can be either transient or recurrent. Transience and recurrence issues are central to the study of Markov chains and help describe the Markov chain's overall structure. In the case of CTOQW, transience and recurrence notions are made more complicated by the fact that the process $(X_t)_t$ alone is not a Markov chain.

In the present section, we define the notion of recurrence and transience of a vertex in our setup and prove a dichotomy similar to the classical case, based on the average occupation time at a vertex. However, compared to the classical case, the relationship between the occupation time and the first passage time at the vertex is less straightforward. Recall that the first passage time at a given vertex $i\in V$ is defined as
\[\tau_i=\inf\{t\geq T_1|X_t=i\}\,\]
where $T_1$ is defined in \Cref{sectQtraj}. Similarly the occupation time is given by  
\[n_i=\int_{0}^{\infty}\ind_{X_t=i} \ \der t \,.\]
In the discrete-time and irreducible case (Theorem 3.1.\ of \cite{Pa1}), the authors prove that there exists a trichotomy rather than the classical dichotomy. We state a similar result for continuous-time semifinite open quantum walks (we recall that an OQW is semifinite if $\dim\h_i<\infty$ for all $i\in V$).

\begin{thm}\label{trich}
Consider a semifinite irreducible continuous-time open quantum walk. Then we are in one (and only one) of the following situations:
\begin{enumerate}
\item For any $i,j$ in $V$ and $\rho$ in $\Sb_{\h_i}$, one has $\E_{i,\rho}(n_j)=\infty $ and $\Pro_{i,\rho}(\tau_j<\infty)=1$.
\item For any $i,j$ in $V$ and $\rho$ in $\Sb_{\h_i}$, one has $\E_{i,\rho}(n_j)<\infty $ and $\Pro_{i,\rho}(\tau_i<\infty)<1$.
\item For any $i,j$ in $V$ and $\rho$ in $\Sb_{\h_i}$, one has $\E_{i,\rho}(n_j)<\infty $, but there exist $i$ in $V$ and $\rho,\rho'$ in $\Sb_{\h_i}$ ($\rho$ necessarily non-faithful) such that $\Pro_{i,\rho}(\tau_i<\infty)=1$ and $\Pro_{i,\rho'}(\tau_i<\infty)<1$.
\end{enumerate}
\end{thm}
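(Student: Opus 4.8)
The plan is to adapt the return-time analysis of \cite{Pa1} to the continuous-time setting; the new feature is that the continuous evolution $e^{tG_i}$ taking place during a sojourn at a vertex must be folded into the relevant completely positive maps. Fix a vertex $i$ and define on $\Ical_1(\h_i)$ the \emph{sojourn map} $\Gamma_i(\rho)=\int_0^\infty e^{tG_i}\rho\, e^{tG_i^*}\,\der t$ and the \emph{first-return map} $\mathcal A_i(\rho)=\sum_{n\ge 1}\sum_{i_1,\dots,i_{n-1}\in V\setminus\{i\}}\int_{0<t_1<\cdots<t_n}R(\xi)\rho R(\xi)^*\,\der t_1\cdots\der t_n$, where $\xi=(i,i_1,\dots,i_{n-1},i;t_1,\dots,t_n)$ and $R(\xi)$ is as in \eqref{eq_traj}. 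Using \Cref{prop:dyson_CTOQW} and the quantum-trajectory picture of \Cref{rmk:dyson_expansion}, I would first record the elementary identities, valid for a state $\rho\in\Sb_{\h_i}$: $\Tr\big(\mathcal A_i^{\,n}(\rho)\big)=\Pro_{i,\rho}(\text{the walker returns to }i\text{ at least }n\text{ times})$, hence $\Pro_{i,\rho}(\tau_i<\infty)=\Tr\big(\rho\,\mathcal A_i^*(\id)\big)$, and, decomposing $n_i$ into the successive sojourns at $i$, $\E_{i,\rho}(n_i)=\Tr\big(\Gamma_i\big(\sum_{n\ge 0}\mathcal A_i^{\,n}(\rho)\big)\big)$.

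Next, two a priori facts that hold because the CTOQW is irreducible and $\dim\h_i<\infty$ — which together rule out a \emph{dark} vector $0\ne\varphi\in\h_i$ with $R_i^j\varphi=0$ for all $j$ and $H_i\varphi\in\C\varphi$: the spectral abscissa of $G_i$ is negative, so $\|\Gamma_i\|<\infty$, and $\Gamma_i^*(\id)\ge c_i\,\id$ for some $c_i>0$. Consequently $\E_{i,\rho}(n_i)<\infty$ for every state $\rho$ if and only if $r_i:=r(\mathcal A_i)<1$, while $r_i=1$ forces $\E_{i,\rho}(n_i)=\infty$ for every $\rho$. The crucial structural observation is that $\mathcal A_i$ is itself irreducible: products of the Kraus operators of first-return trajectories are exactly the operators $R(\xi)$ for arbitrary $\xi\in\mathcal P(i,i)$, whose images of any nonzero $\varphi\in\h_i$ span a dense subspace by \Cref{posimppath}. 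Finite-dimensional Perron--Frobenius then gives the within-vertex trichotomy: if $r_i=1$, an irreducible $\mathcal A_i$ has a faithful invariant state $\rho_0$, and $\Tr\big(\rho_0(\id-\mathcal A_i^*(\id))\big)=0$ together with $\id-\mathcal A_i^*(\id)\ge 0$ forces $\mathcal A_i^*(\id)=\id$, i.e.\ $\Pro_{i,\rho}(\tau_i<\infty)=1$ for all $\rho$; and $\mathcal A_i^{*n}(\id)=\id$ for all $n$, so the walker a.s.\ returns to $i$ infinitely often (call this property $(\mathrm R)$ at $i$). If $r_i<1$, then $\mathcal A_i^{*n}(\id)\downarrow 0$, the walker a.s.\ returns to $i$ only finitely often, and $\E_{i,\rho}(n_i)<\infty$ for all $\rho$; here one further distinguishes $(\mathrm T)$, where $\id-\mathcal A_i^*(\id)$ is invertible and $\Pro_{i,\rho}(\tau_i<\infty)<1$ for all $\rho$, from $(\mathrm M)$, where $\mathcal A_i^*(\id)$ has $1$ as an eigenvalue; in case $(\mathrm M)$ an eigenvector $\rho$ achieves $\Pro_{i,\rho}(\tau_i<\infty)=1$, and it must be non-faithful, since a faithful one would again force $\mathcal A_i^*(\id)=\id$ and hence $r_i=1$.

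It remains to globalise, which I would do with the strong Markov property of the Markov process $(X_t,\rho_t)_t$, irreducibility, and compactness of the sets $\Sb_{\h_j}$. From \Cref{posimppath} one extracts, for $j\ne i$, a uniform lower bound $\inf_{\sigma\in\Sb_{\h_j}}\Pro_{j,\sigma}(\tau_i<\infty)\ge c_{ij}>0$ (by affinity in $\sigma$, irreducibility and compactness). Using this together with strong Markov and a conditional Borel--Cantelli argument along the successive return times to a vertex, one shows that the tail event ``$X_t$ returns to $i$ infinitely often'' has the same probability, for every starting point, as ``$X_t$ returns to $j$ infinitely often''; hence property $(\mathrm R)$ holds at one vertex if and only if it holds at all of them. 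In the case where $(\mathrm R)$ holds everywhere we are in situation 1: $\Pro_{i,\rho}(\tau_i<\infty)=1$ since $\mathcal A_i$ is trace preserving, $\Pro_{i,\rho}(\tau_j<\infty)=1$ because $j$ is visited infinitely often almost surely, and $\E_{i,\rho}(n_j)=\infty$ because the walker reaches $j$ and $\E_{j,\sigma}(n_j)=\infty$ by $(\mathrm R)$ at $j$. In the complementary case $r_j<1$ for every $j$, whence $\E_{j,\sigma}(n_j)\le\|\Gamma_j\|\sum_n\|\mathcal A_j^{\,n}\|=:M_j<\infty$ uniformly in $\sigma$ and, by the strong Markov property at the first hitting time of $j$, $\E_{i,\rho}(n_j)\le M_j<\infty$ for all $i,\rho$; we are then in situation 2 if every vertex is of type $(\mathrm T)$, and in situation 3 as soon as some vertex is of type $(\mathrm M)$ (with the non-faithfulness clause already established). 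Mutual exclusivity is clear: situation 1 is singled out by $\E(n_j)=\infty$, and situations 2 and 3 are separated by whether some $\Pro_{i,\rho}(\tau_i<\infty)$ equals $1$.

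The main obstacle I anticipate is precisely the globalisation: on an infinite graph one cannot rely on a uniform lower bound for the probability of visiting $j$ before returning to $i$, so the identification ``returns to $i$ infinitely often $\Leftrightarrow$ returns to $j$ infinitely often'' has to be obtained more carefully — for instance by analysing the Markov chain formed by the successive states at the return times to a fixed vertex (driven by the irreducible, trace-preserving normalisation of $\mathcal A_i$ in the recurrent case) and combining its ergodic behaviour with a Borel--Cantelli argument for excursions. The within-vertex trichotomy, by contrast, is a clean finite-dimensional Perron--Frobenius statement once one notices that $\mathcal A_i$ inherits irreducibility from the CTOQW.
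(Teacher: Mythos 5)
Your proposal is correct and its skeleton coincides with the paper's: the first-return map $\mathcal A_i$ is the paper's $\mathfrak P_{i,i}$ (\Cref{Pij}), the Perron--Frobenius step on this irreducible CP map (faithful eigenstate, $\mathcal A_i^*(\id)=\id$ forcing return probability one, non-faithfulness in the mixed case) is exactly \Cref{la312} and \Cref{cordij}, the lower bound $\Gamma_i^*(\id)\ge c_i\,\id$ on the sojourn map plays the role of the estimate $\E_{i,\hat\rho}(T_n^i)\ge \|G_i+G_i^*\|_\infty^{-1}$ in \Cref{yet}, and the compactness-of-$\Sb_{\h_i}$ argument for hitting $j$ is \Cref{ijbis}. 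The one place where you genuinely diverge is the globalisation of the occupation-time dichotomy, and it is precisely the step you flag as the main obstacle. The paper does not compare ``returns to $i$ i.o.'' with ``returns to $j$ i.o.'' at all: it transfers $\E_{i,\rho}(n_j)=\infty$ between arbitrary pairs $(i,\rho,j)$ directly at the semigroup level (\Cref{12dist}), using that irreducibility plus finite-dimensional fibres make $e^{s\Lcal}$ and $e^{s\Lcal^*}$ positivity improving, so that $e^{s\Lcal}(\rho\otimes\ketbra ii)\ge\alpha\,\rho_0\otimes\ketbra{i_0}{i_0}$ and $e^{s\Lcal^*}(\id\otimes\ketbra jj)\ge\beta\,\id\otimes\ketbra{j_0}{j_0}$; this gives $\E_{i,\rho}(n_j)\ge\alpha\beta\,\E_{i_0,\rho_0}(n_{j_0})$ in two lines and works on infinite graphs with no uniformity in the vertex. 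Once that dichotomy is in hand, the ``returns i.o.'' equivalence is not needed: $\E=\infty$ everywhere gives, via Perron--Frobenius, $\mathfrak P_{j,j}^*(\id)=\id$ at every $j$, and then the $\delta$-window argument of \Cref{ijbis} (which only needs compactness of the single simplex $\Sb_{\h_i}$, not a bound uniform over $V$) yields $\Pro_{i,\rho}(\tau_j<\infty)=1$. If you keep your Borel--Cantelli route you must indeed replace $\inf_\sigma\Pro_{j,\sigma}(\tau_i<\infty)\ge c_{ij}$ by the time-window bound $\inf_\sigma\Pro_{j,\sigma}(\tau_i\le\delta)\ge\alpha$ and space out the return times by $\delta$, exactly as you suggest; but the paper's positivity-improving shortcut makes that machinery unnecessary for the occupation-time part. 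A last cosmetic point: your formula $\E_{i,\rho}(n_i)=\Tr\bigl(\Gamma_i\bigl(\sum_n\mathcal A_i^n(\rho)\bigr)\bigr)$, with the sojourn map written explicitly, is actually the more careful version of \Cref{core}; since $c_i\id\le\Gamma_i^*(\id)\le C_i\id$ in the semifinite irreducible case, the two are equivalent for all finiteness questions.
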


Note that in the sequel we only focus on semifinite case. Recall that when $\h_i$ is one-dimensional for all $i\in V$, we recover classical continuous-time Markov chains. In this case, the Markov chain falls in one of the first two categories of this theorem; that is, the third category is a specifically quantum situation.

The rest of this section is dedicated to the proof of Theorem \ref{trich}. More precisely, in Subsection \ref{subsect_def_trans} we prove the dichotomy between infinite and finite average occupation time. This allows us to define transience and recurrence of CTOQW. We also give examples of CTOQW that fall in each of the three classes of Theorem \ref{trich}. In Section \ref{subsect_tech} we state technical results that give closed expressions to the occupation time and the first passage time. Finally, the proof of Theorem \ref{trich} is given in Subsection \ref{subsect_proof}.

\subsection{Definition of recurrence and transience}\label{subsect_def_trans}

We begin by proving that for an irreducible CTOQW, the average occupation time $\E_{i,\rho}(n_j)$ of site $j$ starting from site $i$ is either finite for all $i,j$ or infinite for all $i,j$.

\begin{prop}\label{12dist}
Consider a semifinite irreducible continuous-time open quantum walk. Suppose furthermore that there exist $i_0,j_0\in V$ and $\rho_0\in \Sb_{\h_{i_0}} $ such that $\E_{i_0,\rho_0}(n_{j_0})=\infty$. Then, for all $i,j\in V$ and $\rho\in \Sb_{\h_i} $ one has $\E_{i,\rho}(n_{j})=\infty$. 
\end{prop}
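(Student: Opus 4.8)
The goal is to show that if the mean occupation time $\E_{i_0,\rho_0}(n_{j_0})$ is infinite for one triple $(i_0,j_0,\rho_0)$, then it is infinite for all triples $(i,j,\rho)$. The strategy I would pursue is to express $\E_{i,\rho}(n_j)$ in terms of the Dyson-expansion operators and the semigroup, and then use irreducibility (in the form of \Cref{posimppath}) to ``transport'' the infinity from the distinguished triple to an arbitrary one, on both ends of the trajectory. Concretely, writing $\mu=\rho\otimes\proj ii$ one has
\[
\E_{i,\rho}(n_j)=\int_0^\infty \E_{i,\rho}(\ind_{X_t=j})\,\der t
=\int_0^\infty \Tr\!\big(\Ind_{\h_j}\,\Tcal_t(\mu)\big)\,\der t,
\]
using the identity $\E_{\mu_0}(\mu_t)=\Tcal_t(\mu_0)$ from \Cref{rmk:dyson_expansion} (here $\Ind_{\h_j}$ is the projection onto $\h_j\otimes|j\rangle$). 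So the quantity of interest is the ``potential'' $U(\mu,j):=\int_0^\infty\Tr(\Ind_{\h_j}\Tcal_t(\mu))\,\der t\in[0,\infty]$, and the claim is that $U$ is either identically $+\infty$ or everywhere finite on block-diagonal states.

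The first reduction is on the state $\rho$: since every $\rho\in\Sb_{\h_i}$ dominates a multiple of any given rank-one projection in its range, and conversely is dominated by a multiple of the identity on the (finite-dimensional!) space $\h_i$ — here semifiniteness is essential — positivity of $\Tcal_t$ gives, for faithful $\rho$, two-sided bounds $c\,\Tr(\Ind_{\h_j}\Tcal_t(\proj\varphi\varphi\otimes\proj ii))\le \Tr(\Ind_{\h_j}\Tcal_t(\mu))\le C\sum_k\Tr(\Ind_{\h_j}\Tcal_t(\proj{e_k}{e_k}\otimes\proj ii))$. Hence finiteness/infiniteness of $U(\rho\otimes\proj ii,j)$ does not depend on the choice of $\rho\in\Sb_{\h_i}$, and it suffices to work with pure initial states $\varphi\in\h_i$, $\|\varphi\|=1$; I will write $U(i,\varphi;j)$ for $U(\proj\varphi\varphi\otimes\proj ii,j)$.

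Next, the semigroup property $\Tcal_{s+t}=\Tcal_s\circ\Tcal_t$ combined with positivity gives, for any fixed $s>0$ and any unit vectors $\varphi\in\h_i$, $\psi\in\h_k$,
\[
U(i,\varphi;j)\;\ge\;\Tr\!\big(\proj\psi\psi\otimes\proj kk\;\Tcal_s(\proj\varphi\varphi\otimes\proj ii)\big)\cdot U(k,\psi;j)
=|\langle\psi,A_s\varphi\rangle|^2\,U(k,\psi;j),
\]
where, by the Dyson expansion of \Cref{prop:dyson_CTOQW}, $\Tr(\proj\psi\psi\otimes\proj kk\,\Tcal_s(\proj\varphi\varphi\otimes\proj ii))=\sum_{n}\sum_{\text{paths}}\int |\langle\psi,T_s(\xi)\varphi\rangle|^2\,\der t_1\cdots\der t_n$ is a sum of nonnegative terms of the form $|\langle\psi,e^{(s-t_n)G_k}R(\xi')\varphi\rangle|^2$ with $\xi'\in\Pcal(i,k)$. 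By irreducibility (\Cref{posimppath}), the set $\{R(\xi')\varphi:\xi'\in\Pcal(i,k)\}$ is total in $\h_k$, and the extra factor $e^{(s-t_n)G_k}$ — a contraction semigroup that tends to the identity as $s-t_n\to0$ — does not destroy this; so for a suitable $s$ and path the inner product $\langle\psi,T_s(\xi)\varphi\rangle$ is nonzero. Therefore $U(k,\psi;j)=\infty$ forces $U(i,\varphi;j)=\infty$. Applying this with $(k,\psi)=(i_0,\varphi_0)$ (and using the first reduction to pass from $\rho_0$ to a pure state $\varphi_0$ in its range — legitimate because $\E_{i_0,\rho_0}(n_{j_0})=\infty$ means $U(i_0,\rho_0;j_0)=\infty$, which by the two-sided bound forces $U(i_0,\varphi_0;j_0)=\infty$ for some, hence every, such $\varphi_0$) we conclude $U(i,\varphi;j_0)=\infty$ for all $i$ and all unit $\varphi\in\h_i$: the infinity has been transported to an arbitrary \emph{starting} configuration.

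Finally I must move the infinity from the \emph{target} $j_0$ to an arbitrary target $j$. For this I use the semigroup property on the other side: $U(i,\varphi;j_0)=\int_0^\infty\Tr(\Ind_{\h_{j_0}}\Tcal_{s+t}(\mu))\,\der s\,$-type manipulations, or more cleanly, $\Tcal_{t+s}=\Tcal_s\Tcal_t$ gives $\Tr(\Ind_{\h_{j_0}}\Tcal_{t+s}(\mu))=\Tr(\Ind_{\h_{j_0}}\Tcal_s(\Tcal_t(\mu)))\le \|\Tcal_s^*(\Ind_{\h_{j_0}})\|_{\text{?}}$ — better: decompose $\Tcal_t(\mu)=\sum_k\rho_t(k)\otimes\proj kk$ and write $\Tr(\Ind_{\h_{j_0}}\Tcal_{t+s}(\mu))=\sum_k\Tr(\Ind_{\h_{j_0}}\Tcal_s(\rho_t(k)\otimes\proj kk))$. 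For the fixed value $k=j$, the inner probability $\Tr(\Ind_{\h_{j_0}}\Tcal_s(\rho_t(j)\otimes\proj jj))$ is, by irreducibility and the same Dyson/total-set argument as above (now with a fixed small $s_0>0$), bounded above by $\kappa_{s_0}\,\Tr(\rho_t(j))$ for a constant $\kappa_{s_0}<\infty$ depending only on $s_0$, $j$, $j_0$ — indeed $\rho\mapsto\Tr(\Ind_{\h_{j_0}}\Tcal_{s_0}(\rho\otimes\proj jj))$ is a bounded positive linear functional on the finite-dimensional $\Ical_1(\h_j)$. Integrating $\int_0^\infty(\cdots)\der t$ and dropping all $k\ne j$ terms then yields
\[
\infty=U(i,\varphi;j_0)\;\le\;\kappa_{s_0}\int_0^\infty\Tr(\rho_t(j))\,\der t \;+\;(\text{contribution of }s\le s_0)
\;=\;\kappa_{s_0}\,U(i,\varphi;j)+C,
\]
where the short-time contribution $C\le s_0$ is finite; hence $U(i,\varphi;j)=\infty$, and combined with the first reduction $\E_{i,\rho}(n_j)=U(i,\rho;j)=\infty$ for all $i,j$ and $\rho$.

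\textbf{Main obstacle.} The delicate point is the use of irreducibility: \Cref{posimppath} tells us $\{R(\xi)\varphi\}$ is total in $\h_j$, but in the estimates above what actually appears is $T_s(\xi)\varphi=e^{(s-t_n)G_j}R(\xi)\varphi$ with a \emph{fixed} total time budget $s$ (or a small one $s_0$), so I have to argue that for an appropriate choice of $s$ and of the jump times $t_1<\cdots<t_n<s$ the vectors $T_s(\xi)\varphi$ still span (or at least reach a prescribed $\psi$). This is the continuous-time analogue of the discrete issue that ``reachability in $n$ steps'' must be upgraded to a time-uniform statement; it is handled by continuity of $T_s(\xi)$ in the $t_k$'s and in $s$ (the factors $e^{tG_{i_k}}$ being norm-continuous with $e^{0\cdot G}=\id$), exactly as in the proof of $1.\Leftrightarrow 2.$ in \Cref{irr}. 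Making the two-sided bounds of the first reduction and the bounded-functional estimate $\kappa_{s_0}$ genuinely uniform — which is where $\dim\h_i<\infty$ enters — also needs a line of care, but presents no real difficulty in the semifinite setting.
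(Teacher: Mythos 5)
Your overall strategy --- transport the infinity of the occupation time from $(i_0,\rho_0,j_0)$ to $(i,\rho,j)$ by acting with the semigroup on both the source and the target ends --- is the right one, and is essentially what the paper does. But your execution of the target-end transport is broken. You claim $\infty=U(i,\varphi;j_0)\le\kappa_{s_0}\,U(i,\varphi;j)+C$ by writing $\Tr\big(\Ind_{\h_{j_0}}\Tcal_{t+s_0}(\mu)\big)=\sum_k\Tr\big(\Ind_{\h_{j_0}}\Tcal_{s_0}(\rho_t(k)\otimes\proj kk)\big)$ and then ``dropping all $k\ne j$ terms''. Dropping nonnegative terms from a sum produces a \emph{lower} bound on that sum, whereas your inequality requires an \emph{upper} bound; and no upper bound of the form $\Pro(X_{t+s_0}=j_0)\le\kappa\,\Pro(X_t=j)$ with $\kappa$ uniform in $t$ can hold in general (already for a classical continuous-time walk on $\Z$ with $j$ far from $i$ and $j_0=i$, the right-hand side is $O(t^{d(i,j)})$ as $t\to0$ while the left-hand side is of order $1$). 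The correct move is the reverse inequality, obtained from the \emph{adjoint} semigroup: by \Cref{rmq_irrLLstar} $\Tcal^*$ is also positivity improving, so (semifiniteness) $\Tcal_{s_0}^*(\Ind\otimes\proj jj)\ge\beta\,\Ind\otimes\proj{j_0}{j_0}$ for some $\beta>0$, whence $\Tr\big(\Ind_{\h_j}\Tcal_{t+s_0}(\mu)\big)\ge\beta\,\Tr\big(\Ind_{\h_{j_0}}\Tcal_t(\mu)\big)$ and, after integration, $U(\mu;j)\ge\beta\,U(\mu;j_0)=\infty$. This is exactly the paper's one-line proof (sandwiching $\Tr\big(e^{(t-2s)\Lcal}(e^{s\Lcal}(\rho\otimes\proj ii))\,e^{s\Lcal^*}(\Ind\otimes\proj jj)\big)$ between the two positivity-improving bounds); you never invoke $\Tcal^*$, and that is precisely the missing ingredient.

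A secondary, repairable flaw: your source-end inequality $U(i,\varphi;j)\ge\big|\langle\psi,A_s\varphi\rangle\big|^2\,U(k,\psi;j)$ rests on the operator inequality $A\ge\langle\psi,A\psi\rangle\,\proj\psi\psi$ for positive $A$, which is false (take $A=\proj\phi\phi$ with $\phi$ not proportional to $\psi$). What \emph{is} true, because $\Tcal_s(\mu)$ is block-diagonal, is $\Tcal_s(\mu)\ge\rho_s(i_0)\otimes\proj{i_0}{i_0}$ with $\Tr\rho_s(i_0)>0$ by irreducibility; combined with your (correct) first reduction that finiteness of $U(i_0,\cdot\,;j_0)$ is independent of the state on the finite-dimensional $\h_{i_0}$, this gives $U(i,\varphi;j_0)\ge\Tr(\rho_s(i_0))\,U\big(i_0,\rho_s(i_0)/\Tr\rho_s(i_0);j_0\big)=\infty$ without ever needing to hit a prescribed pure state $\psi$ --- which also disposes of the ``main obstacle'' you worry about at the end.
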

\begin{proof}
Fix $i,j \in V$ and $\rho\in \Sb_{\h_i} $. Then one has
\[\E_{i,\rho}(n_{j})=\int_{0}^{\infty}\Pro_{i,\rho}(X_t=j) \,\der t=\int_{0}^{\infty}\Tr\big(e^{t\Lcal}(\rho\otimes|i\rangle\langle i|)(I\otimes |j\rangle\langle j|)\big) \,\der t\,.\]
By hypothesis, $(\Tcal_t)_{t\geq 0}$ is irreducible and thus positivity improving by Theorem \ref{irr}; by \Cref{rmq_irrLLstar} the same is true of $(\Tcal^*_t)_{t\geq 0}$. Therefore, since for any $i\in V$, $\h_i$ is finite-dimensional, for any $s>0$ there exist scalars $\alpha,\beta>0$ such that
\[e^{s\Lcal}(\rho\otimes|i\rangle\langle i|)\geq \alpha \ \rho_0\otimes|i_0\rangle\langle i_0|\ \mbox{ and } \ e^{s\Lcal^*}(I\otimes|j\rangle\langle j|)\geq \beta \ I\otimes|j_0\rangle\langle j_0|   \, .\]
We then have, fixing $s>0$,
\begin{align*}\E_{i,\rho}(n_{j})
&\geq\int_{2s}^{\infty}\Tr\big(e^{(t-2s)\Lcal}\big(e^{s\Lcal}(\rho\otimes|i\rangle\langle i|)\big)\, e^{s\Lcal^*}(I\otimes |j\rangle\langle j|)\big) \ \der t\\
&\geq \alpha\beta \int_{0}^{\infty}\Tr\big(e^{u\Lcal}(\rho_0\otimes|i_0\rangle\langle i_0|)(I\otimes |j_0\rangle\langle j_0|)\big) \ \der u\\
&\geq \alpha\beta\  \E_{i_0,\rho_0}(n_{j_0}) \ .\end{align*}
This concludes the proof.
\end{proof}

The above proposition leads to a natural definition of recurrent and transient vertices of $V$.
\begin{defi}
For any continuous-time open quantum walk, we say that a vertex $i$ in $V$ is:
\begin{itemize}
\item recurrent if for any $\rho\in\Scal_{\h_i}$, $\E_{i,\rho}(n_{i})=\infty$;
\item transient if there exists $\rho\in\Scal_{\h_i}$ such that $\E_{i,\rho}(n_{i})<\infty$.
\end{itemize}
Thus, by Proposition \ref{12dist}, for an irreducible CTOQW, either all vertices are recurrent or all vertices are transient. Furthermore, in the transient case, $\E_{i,\rho}(n_{i})<\infty$ for all $\rho$ in $\Scal_{\h_i}$.
\end{defi}

We conclude this section by illustrating Theorem \ref{trich} by simple examples. The $n$-th example corresponds to the $n$-th situation in Theorem \ref{trich}.

\begin{exe}\
\begin{enumerate}
\item For $V=\{0,1\}$ and $\h_0=\h_1=\C$, consider the CTOQW characterized by the following operators:
\[G_0=G_1=-\frac{1}{2}\,,\qquad R_0^{1}=R_1^0=1 \, .\]
Then the process $(X_t)_{t\geq0}$ is a classical continuous Markov chain on $\{0,1\}$, where the walker jumps from one site to the other after an exponential time of parameter 1.  
\item For $V=\Z$ and $\h_i=\C$ for all $i\in\Z$, consider the CTOQW described by the transition operators:
\[G_i=-\frac{1}{2}\,,\qquad R_i^{i+1}=\frac{\sqrt 3}{ 2}\,,\qquad R_i^{i-1}=\frac{1}{2}, \text{ for all } i\in\Z\, .\]
The process $(X_t)_{t\geq0}$ is a classical continuous Markov chain on $\Z$ where after an exponential time of parameter 1, the walker jumps to the right with probability $\frac{3}{4}$ or to the left with probability $\frac{1}{4}$.
\item Consider the CTOQW defined by $V = \N$ with $\h_1=\C^2$ and $\h_0=\h_i=\C$ for $i\geq 2$, and 
\[G_0=-\frac{1}{2}\,,\quad R_0^1= \frac{1}{\sqrt 5}\begin{pmatrix}
2\\
1
\end{pmatrix}\,,\quad G_1= -\frac{1}{2}I_{2}\,, R_1^0= \begin{pmatrix}
0&1
\end{pmatrix}\,,\quad  R_1^2= \begin{pmatrix}
1&0
\end{pmatrix}\,,\]
\[ R_2^1= \frac{1}{2\sqrt 2}\begin{pmatrix}
1\\
1
\end{pmatrix}\,,\quad G_i=-\frac{1}{2}\,,\quad R_i^{i+1}=\frac{\sqrt 3}{ 2} \text{ for } i\geq 2 \text{ and } R_i^{i-1}=\frac{1}{2}\text{ for } i\geq 3\, .\]
This is an example of positivity improving CTOQW where, for $\rho=\begin{pmatrix}
0&0\\
0&1
\end{pmatrix}$, one has  $$\Pro_{1,\rho}(\tau_1<\infty)=1$$ but $$\Pro_{i,\rho'}(\tau_i<\infty)<1$$ for any $\rho'\neq\begin{pmatrix}
0&0\\
0&1
\end{pmatrix}$. This example therefore exhibits ``specifically quantum'' behavior. This example is inspired from \cite{Pa1}.
\end{enumerate}
\end{exe}

\subsection{Technical results}\label{subsect_tech}

\Cref{Pij} below is essential, as it expresses the probability of reaching a site in finite time as the trace of the initial state, evolved by a certain operator.

\begin{prop}\label{Pij}
For any continuous-time open quantum walk, there exists a completely positive linear operator $\mathfrak{P}_{i,j}$ from $\mathcal{I}(\h_i)$ to $\mathcal{I}(\h_j)$ such that for every $i,j\in V$ and $\rho\in \Sb_{\h_i} $,
\[\Pro_{i,\rho}(\tau_j<\infty)=\Tr\big(\mathfrak{P}_{i,j}(\rho)\big) \, .\]
Furthermore, the map $\mathfrak{P}_{i,j}$ can be expressed by:
\[\mathfrak{P}_{i,j}(\rho)=\sum_{n=0}^\infty\displaystyle\sum_{\substack{{i_1,\ldots,i_{n-1} \in V\backslash\{j\}} \\ i_0=i, i_n=j}}\int_{0<t_1<\ldots<t_n<\infty} R(\xi)\, \rho\,R(\xi)^*\der t_{1}\ldots \der t_{n}\,,\] 
where we recall that $
R(\xi)=R_{i_{n-1}}^{i_n}\,e^{(t_{n}-t_{n-1}) G_{i_{n-1}}}R_{i_{n-2}}^{i_{n-1}}  \,  \ldots \,R_{i_0}^{i_1}\, e^{t_{1} G_{i_0}}$ for $\xi=(i_0,\ldots,i_n;t_1,\dots,t_n)$.
\end{prop}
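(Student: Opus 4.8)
The plan is to decompose the event $\{\tau_j < \infty\}$ according to the trajectory data provided by the construction in Subsection \ref{rmk:dyson_expansion}, where the process $(X_t,\rho_t)$ lives on $(\Xi^\infty,\Sigma^\infty,\Pro_{i,\rho})$. Recall that $\tau_j = \inf\{t \geq T_1 \mid X_t = j\}$, so on a trajectory $\xi = (i_0,\ldots,i_n,\ldots;t_1,t_2,\ldots)$ starting at $i_0 = i$, the event $\{\tau_j < \infty\}$ occurs precisely when $j$ appears somewhere in the sequence $i_1, i_2, \ldots$ of post-jump positions. The key observation is that this event can be partitioned by the time and index of the \emph{first} visit to $j$ after $T_1$: there is a first integer $n \geq 1$ with $i_n = j$, and before that the positions $i_1, \ldots, i_{n-1}$ all avoid $j$ (note $i_0 = i$ may equal $j$, but that visit does not count). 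Summing the probabilities of these disjoint events, each of which is described by a ``first-passage path'' $\xi = (i_0,\ldots,i_n;t_1,\ldots,t_n)$ with $i_0 = i$, $i_n = j$, $i_1,\ldots,i_{n-1} \in V \setminus \{j\}$, gives the claimed series.

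Concretely, I would proceed as follows. First, fix $n \geq 1$ and a tuple $(i_1,\ldots,i_{n-1}) \in (V\setminus\{j\})^{n-1}$ with $i_0 = i$, $i_n = j$, and consider the event $E_n^{(i_1,\ldots,i_{n-1})}$ that the process makes exactly these jumps as its first $n$ jumps and that $j$ was not visited (post-$T_1$) before the $n$-th jump. Using the Dyson-expansion description of $\Pro_{i,\rho}$ in Equation \eqref{eq_prob_dyson} together with the expression \eqref{eq:prop_dyson_CTOQW2} for $T_t(\xi)$, the probability of the sub-event where in addition the first $n$ jump times lie in $0 < t_1 < \cdots < t_n$ (infinitesimally) and the process is observed at some time $t > t_n$ is $\Tr\big(T_t(\xi)\rho\, T_t(\xi)^*\big)\,\der t_1 \cdots \der t_n$. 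Crucially, since the first $n$ jumps of $\xi$ land on $j$ only at step $n$, and since after reaching $j$ at time $t_n$ the subsequent evolution is irrelevant to whether $\tau_j < \infty$, the probability of $E_n^{(i_1,\ldots,i_{n-1})}$ is obtained by integrating over all $0 < t_1 < \cdots < t_n < \infty$ and tracing out the remaining evolution; because $T_t(\xi) = e^{(t-t_n)G_j} R(\xi)$ and $\int_0^\infty$ of the relevant conditional quantity collapses to $\Tr\big(R(\xi)\rho R(\xi)^*\big)$ after one uses that the total CTOQW is trace-preserving (equivalently, that $\Tr\big(e^{u\Lcal}(\sigma)\big)=\Tr(\sigma)$ for a state $\sigma$ on $\h_j$ placed at $j$), we get exactly $\int_{0<t_1<\cdots<t_n<\infty} \Tr\big(R(\xi)\rho R(\xi)^*\big)\,\der t_1\cdots\der t_n$. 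Summing over $n$ and over the tuples $(i_1,\ldots,i_{n-1})$ yields $\Pro_{i,\rho}(\tau_j<\infty)$, and setting $\mathfrak P_{i,j}(\rho)$ equal to the corresponding operator-valued series (without the trace) gives the stated formula; complete positivity is immediate since each summand is of Kraus form $\rho \mapsto R(\xi)\rho R(\xi)^*$ and complete positivity is preserved under sums and (monotone) limits.

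The delicate point — and the main obstacle — is justifying the passage from the Dyson/quantum-trajectory description on finite time horizons to the statement about $\tau_j < \infty$, i.e.\ controlling the ``remaining evolution after reaching $j$''. One must argue that $\Pro_{i,\rho}(\tau_j < \infty)$ equals the sum over $n$ of the first-passage contributions without any leftover mass ``at infinity''; this uses that $\lim_n T_n = +\infty$ almost surely (established in Subsection \ref{sectQtraj}, so there is no explosion and the jump times genuinely exhaust $[0,\infty)$) together with monotone convergence to interchange the sum over $n$, the sum over intermediate indices, and the time integrals. A clean way to organize this is to introduce, for each horizon $t$, the finite-$t$ first-passage probabilities $\Pro_{i,\rho}(\tau_j \leq t)$, write each as a finite-looking Dyson sum restricted to paths whose first visit to $j$ happens before $t$, check that letting $t \to \infty$ produces the claimed series by monotone convergence, and finally observe $\Pro_{i,\rho}(\tau_j < \infty) = \lim_{t\to\infty}\Pro_{i,\rho}(\tau_j \leq t)$. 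The algebraic identity making the integrals converge — that integrating $\Tr\big(T_t(\xi)\rho T_t(\xi)^*\big)$ in $t$ over $(t_n,\infty)$ after conditioning appropriately returns $\Tr\big(R(\xi)\rho R(\xi)^*\big)$ — is where trace preservation of the CTOQW restricted to the single site $j$ (equivalently, that $G_j + G_j^* + \sum_{k\neq j} R_j^{k*}R_j^k = 0$ is replaced, for the first-passage operator, by the fact that once at $j$ we only care about \emph{being} at $j$, not staying) must be invoked carefully; I would phrase it via the normalization $\Pro_\mu^t(\Xi_t) = \Tr(e^{t\Lcal}\mu) = 1$ applied on the single-site subsystem and a dominated/monotone convergence argument to pass $t \to \infty$.
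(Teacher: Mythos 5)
Your argument is correct and follows essentially the same route as the paper: decompose $\{\tau_j<t\}$ over first-passage paths, use trace preservation of the full semigroup $e^{s\Lcal}$ (i.e.\ $\Tr\big(e^{(t-t_n)\Lcal}(R(\xi)\rho R(\xi)^*\otimes\proj jj)\big)=\Tr\big(R(\xi)\rho R(\xi)^*\big)$) to discard the evolution after the first hit of $j$, and let $t\to\infty$ by monotonicity. The only part of the statement you treat too lightly is that $\mathfrak P_{i,j}$ is a well-defined bounded operator on all of $\mathcal I(\h_i)$, not just on states: the paper decomposes a general trace-class $X$ into four positive parts, bounds the trace norm of the partial sums by $2\Tr|X|$ using the fact that each partial sum applied to a state has trace at most $\Pro_{i,\cdot}(\tau_j<N)\le 1$, and concludes by Banach--Steinhaus; your remark that ``complete positivity is preserved under sums and monotone limits'' covers positive inputs but should be supplemented by this (easy) extension to establish the operator as stated.
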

Note that we do not require the $\h_i$ to be finite-dimensional here.
\begin{proof}We have the trivial identity:
\begin{align}\label{eq:tauj}\Pro_{i,\rho}(\tau_j<t)&=\sum_{n=0}^\infty~\displaystyle\sum_{\substack{{i_1,\ldots,i_{n-1} \in V\backslash\{j\}} \\ i_0=i, i_n=j}}~\int_{0<t_1<\ldots<t_n<t} \Tr\big(e^{(t-t_n)\Lcal}\big(R(\xi)\, \rho\,R(\xi)^* \otimes |j\rangle\langle j|\big)\big)\der t_{1}\ldots \der t_{n} \, .\end{align}
Then, since $e^{(t-t_n)\Lcal}$ is trace preserving, 
\[\Pro_{i,\rho}(\tau_j<t)
=\sum_{n=0}^\infty~\displaystyle\sum_{\substack{{i_1,\ldots,i_{n-1} \in V\backslash\{j\}} \\ i_0=i, i_n=j}}~\int_{0<t_1<\ldots<t_n<t} \Tr\big(R(\xi)\, \rho\,R(\xi)^* \big)\der t_{1}\ldots \der t_{n} \, ,\]
and since both sides of the identity are nondecreasing in $t$, taking the limit $t\to+\infty$ yields
\[\Pro_{i,\rho}(\tau_j<\infty)
=\sum_{n=0}^\infty~\displaystyle\sum_{\substack{{i_1,\ldots,i_{n-1} \in V\backslash\{j\}} \\ i_0=i, i_n=j}}~\int_{0<t_1<\ldots<t_n<\infty} \Tr\big(R(\xi)\, \rho\,R(\xi)^* \big)\der t_{1}\ldots \der t_{n} \, .\]
It remains to show that $\mathfrak{P}_{i,j}$ is well defined. Let us denote by $(V_n)_{n\in \N}$ an increasing sequence of subsets of $V$ such that $|V_n|=\min (n,|V|)$ and $\bigcup_{n\in \N} V_n=V$. For any $X\in \mathcal{I}(\h_i)\backslash \{0\}$ write the canonical decomposition $X=X_1-X_2+\mathrm i X_3- \mathrm i X_4$ of $X$ as a linear combination of four nonnegative operators. We get
\begin{align*}
&\Tr\,\Big|\sum_{n=0}^N ~\displaystyle\sum_{\substack{{i_1,\ldots,i_{n-1} \in V_N\backslash\{j\}} \\ i_0=i, i_n=j}}~\int_{0<t_1<\ldots<t_n<N} R(\xi)\, X\,R(\xi)^* \,\der t_{1}\ldots \der t_{n}\Big|\\
&\leq \sum_{m=1}^4 \Tr\,\Big|\sum_{n=0}^N ~\displaystyle\sum_{\substack{{i_1,\ldots,i_{n-1} \in V_N\backslash\{j\}} \\ i_0=i, i_n=j}}~\int_{0<t_1<\ldots<t_n<N} R(\xi)\, X_m\,R(\xi)^* \,\der t_{1}\ldots \der t_{n}\Big|\\
&\leq\sum_{m=1}^4\Tr\,X_m \times \sum_{n=0}^N~\displaystyle\sum_{\substack{{i_1,\ldots,i_{n-1} \in V_N\backslash\{j\}} \\ i_0=i, i_n=j}}~\int_{0<t_1<\ldots<t_n<N} \Tr\big(R(\xi)\, \frac{X_m}{\Tr(X_m) }\,R(\xi)^* \big)\,\der t_{1}\ldots \der t_{n}\\
&\leq\sum_{m=1}^4 \Tr\,X_m \times\Pro_{i,\frac{X_m}{\Tr(X_m)}}(\tau_j<N)\\
&\leq \sum_{m=1}^4\Tr\,X_m\\
&\leq 2\Tr\,|X|
\end{align*}
(alternatively apply Theorem 5.17 in \cite{wolftour} to $X_1-X_2$ and $X_3-X_4$). Then
\[\sup_N \,\Tr\,\Big|\sum_{n=0}^N ~\displaystyle\sum_{\substack{{i_1,\ldots,i_{n-1} \in V_N\backslash\{j\}} \\ i_0=i, i_n=j}}~\int_{0<t_1<\ldots<t_n<N} \,R(\xi)\, X\,R(\xi)^*\, \der t_{1}\ldots \der t_{n}\Big|<\infty \, . \]
Consequently, by the Banach-Steinhaus Theorem, the operator on $\mathcal{I}(\h_i)$ to $\mathcal{I}(\h_j)$ defined by
\[\mathfrak{P}_{i,j}(X)=\sum_{n=0}^\infty~\displaystyle\sum_{\substack{{i_1,\ldots,i_{n-1} \in V\backslash\{j\}} \\ i_0=i, i_n=j}}~\int_{0<t_1<\cdots<t_n<\infty} R(\xi)\, X\,R(\xi)^*\,\der t_{1}\cdots \der t_{n} \]
is everywhere defined and bounded.
\end{proof}

As a corollary, using the definition of the operator $\mathfrak{P}_{i,j}$ for $i,j\in V$, we obtain a useful expression for $\E_{i,\rho}(n_j)$:

\begin{cor}\label{core}
For every $i,j\in V$ and $\rho\in \Sb_{\h_i} $, we have
\begin{equation}\label{eq_core}
\E_{i,\rho}(n_j)=\sum_{k=0}^\infty\Tr\big(\mathfrak{P}_{j,j}^k\circ\mathfrak{P}_{i,j}(\rho)\big)\,.
\end{equation}
\end{cor}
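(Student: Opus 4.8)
The plan is to decompose the occupation time $n_j$ according to the successive visits to site $j$, and to recognize each contribution as an application of the operators $\mathfrak{P}_{i,j}$ and $\mathfrak{P}_{j,j}$ from \Cref{Pij}. Concretely, starting from site $i$ with internal state $\rho$, the process reaches $j$ for the first time (if at all) at time $\tau_j$, with a (sub-)probability distribution of the internal state described precisely by $\mathfrak{P}_{i,j}(\rho)$: the trace of this operator is $\Pro_{i,\rho}(\tau_j<\infty)$, and after normalization it is the conditional law of $\rho_{\tau_j}$ given $\tau_j<\infty$. By the strong Markov property of the process $(X_t,\rho_t)_t$ — applied at $\tau_j$ — each subsequent return to $j$ contributes another factor of $\mathfrak{P}_{j,j}$, so that $\mathfrak{P}_{j,j}^k\circ\mathfrak{P}_{i,j}(\rho)$ is the sub-probability state describing the internal degrees of freedom at the $(k+1)$-th visit to $j$ (with its trace equal to the probability of at least $k+1$ visits).

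First I would make precise the link between the average occupation time and the expected number of visits. Between two consecutive visits to $j$, the process spends at site $j$ an amount of time which, conditionally on the internal state at arrival, is exponentially distributed with parameter $\Tr(-(G_j+G_j^*)\,\cdot\,)$ evaluated along the deterministic flow \eqref{EDO}; in fact the cleanest route avoids computing this sojourn law explicitly. Instead I would write
\[
\E_{i,\rho}(n_j)=\int_0^\infty \Pro_{i,\rho}(X_t=j)\,\der t
\]
and split the event $\{X_t=j\}$ according to the number $k$ of jumps into $j$ that have occurred up to time $t$. Using the Dyson expansion \eqref{eq_prob_dyson} together with the definition of $R(\xi)$, each such term, after integrating out the final sojourn time via the identity $\int_0^\infty e^{uG_j^*}A\,e^{uG_j}\,\der u$ being absorbed (and using that $e^{(t-t_n)\Lcal}$ is trace-preserving exactly as in the proof of \Cref{Pij}), yields $\Tr\big(\mathfrak{P}_{j,j}^k\circ\mathfrak{P}_{i,j}(\rho)\big)$. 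Summing over $k\geq 0$ gives \eqref{eq_core}.

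The main obstacle — and the step requiring the most care — is the bookkeeping that shows the $k$-th factor of $\mathfrak{P}_{j,j}$ appears with exactly the right normalization, i.e.\ that concatenating trajectory segments multiplies the corresponding $R(\xi)$ operators and that the intermediate sojourn at $j$ is correctly accounted for when passing from "probability of reaching $j$" to "time spent at $j$". This is essentially the semigroup/strong-Markov identity $\mathfrak{P}_{i,j}\circ(\text{restart at }j)=\mathfrak{P}_{j,j}\circ\mathfrak{P}_{i,j}$ at the level of completely positive maps, which follows by reindexing the sums over trajectories in $\mathcal{P}(i,j)$ that pass through $j$ at intermediate times, exactly as the set $\mathcal{P}(i,j)$ decomposes into trajectories hitting $j$ for the first time followed by trajectories from $j$ to $j$. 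Once this reindexing is set up, convergence of the series is automatic: the partial sums are traces of positive operators bounded by $\E_{i,\rho}(n_j)$, which is finite or infinite consistently with both sides of \eqref{eq_core}, so the identity holds in $[0,\infty]$ without further justification.
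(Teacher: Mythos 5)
You follow the same architecture as the paper: write $\E_{i,\rho}(n_j)=\int_0^\infty\Pro_{i,\rho}(X_t=j)\,\der t$, expand $\Pro_{i,\rho}(X_t=j)$ by the Dyson series, and reindex each trajectory ending at $j$ as a first-passage segment from $i$ to $j$ followed by $k$ first-return segments from $j$ to $j$; the concatenation identity for the operators $R(\xi)$ then produces $\mathfrak{P}_{j,j}^k\circ\mathfrak{P}_{i,j}$ (the operator $\Upsilon$ in the paper's proof is exactly your concatenated $R(\xi)$), and positivity lets you sum the series in $[0,\infty]$. That part of your argument, including the reading of $\Tr\big(\mathfrak{P}_{j,j}^k\circ\mathfrak{P}_{i,j}(\rho)\big)$ as the probability of at least $k+1$ arrivals at $j$, is correct and matches the paper.

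The gap is the step you describe as the final sojourn time ``being absorbed''. For a trajectory whose last jump into $j$ occurs at time $t_n$, with unnormalized post-jump state $\sigma=R(\xi)\rho R(\xi)^*$, the contribution to $\int_0^\infty\Pro_{i,\rho}(X_t=j)\,\der t$ is
\[
\int_0^\infty\Tr\big(e^{uG_j}\,\sigma\, e^{uG_j^*}\big)\,\der u=\Tr\Big(\sigma\int_0^\infty e^{uG_j^*}e^{uG_j}\,\der u\Big)\,,
\]
i.e.\ the expected time spent at $j$ during that visit, whereas \eqref{eq_core} needs this to equal $\Tr(\sigma)$, the probability of that arrival. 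The trace preservation of $e^{(t-t_n)\Lcal}$ that you import from the proof of \Cref{Pij} does not apply: there the walker is released once it reaches $j$, while here it is required to still be at $j$ at time $t$, and $\mu\mapsto e^{uG_j}\mu e^{uG_j^*}$ is not trace-preserving. What is actually needed is $\int_0^\infty e^{uG_j^*}e^{uG_j}\,\der u=\id$; by \eqref{eq_Gi} one only gets for free that $\int_0^\infty e^{uG_j^*}\big(\sum_{l\neq j}R_j^{l*}R_j^l\big)e^{uG_j}\,\der u=\id-\lim_{u\to\infty}e^{uG_j^*}e^{uG_j}$, so the identification holds exactly when $\sum_{l\neq j}R_j^{l*}R_j^l=\id_{\h_j}$ and $e^{uG_j}\to0$, and otherwise the per-visit sojourn operator is nontrivial and must be carried along (or bounded above and below, which is all the later applications of the corollary require). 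A related bookkeeping point: when $i=j$ your decomposition must also account for the initial sojourn at $j$ before the first jump, which corresponds to no factor of $\mathfrak{P}_{j,j}$ or $\mathfrak{P}_{i,j}$ at all. As written, the sojourn step is asserted rather than proved, and it is precisely the place where the relation between $G_j$ and the $R_j^l$ enters the argument.
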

\begin{proof}
Let $i,j\in V$ and $\rho\in \Sb_{\h_i} $. Then
\begin{align*}&\E_{i,\rho}(n_{j})=\int_{0}^{\infty}\Pro_{i,\rho}(X_t=j) \ \der t=\int_{0}^{\infty}\Tr\big(e^{t\Lcal}(\rho\otimes|i\rangle\langle i|)(\id\otimes |j\rangle\langle j|)\big)\,\der t\\
&=\Tr\Big(\sum_{n=0}^\infty\displaystyle\sum_{k=0}^n\sum_{\substack{{m_1,\ldots,m_{k}\in\N}\\ m_1<\cdots<m_k=n}}\sum\limits_{\substack{{i_1,\ldots,i_{{m_1}-1},i_{{m_1}+1},\ldots,i_{{m_k}-1} \in V\backslash\{j\}} \\ i_0=i,\, i_{m_1}=i_{m_2}=\cdots=i_{m_k}=j}}\int_{0<t_1<\cdots<t_n<t} \Upsilon\rho \Upsilon^* \,\der t_{1}\cdots \der t_{n}\,\der t\Big) \, ,
\end{align*}
where $\Upsilon=R_{i_{n-1}}^{i_n}e^{(t_{n}-t_{n-1}) G_{i_{n-1}}}R_{i_{n-2}}^{i_{n-1}}  \,\cdots   R_{i_{m_1}}^{i_{{m_1}+1}}e^{(t_{{m_1}+1}-t_{{m_1}}) G_{i_{m_1}}}R_{i_{{m_1}-1}}^{i_{m_1}}\cdots \,R_{i_0}^{i_1} e^{t_{1} G_{i_0}} $.
The above expression corresponds to a decomposition of any path from $i$ to $j$ as a concatenation of a path from $i$ to $j$ and $k$ paths from $j$ to $j$ which do not go through $j$. This yields \Cref{eq_core}.
\end{proof}

The next corollary allows us to link the quantity $\Pro_{i,\rho}(\tau_j<\infty)$ to the adjoint of the operator $\mathfrak{P}_{i,j}$. In particular, as we shall see, it is a first step towards linking the properties of $\Pro_{i,\rho}(\tau_j<\infty)$ and $\E_{i,\rho}(n_{j})$.

\begin{cor}\label{cordij}
Let $i$ and $j$ be in $V$. One has
 \[\Pro_{i,\rho}(\tau_j<\infty)=1\iff\mathfrak{P}_{i,j}^*(\id)=\begin{pmatrix}
	\id&0\\
	0&*\end{pmatrix} \text{ in the decomposition } \h_i=\ran \rho\oplus  (\ran \rho)^\perp \, .\]  In particular, if there exists a faithful $\rho$ in $\Sb_{\h_i}$ such that $\Pro_{i,\rho}(\tau_j<\infty)=1$, then $\Pro_{i,\rho'}(\tau_j<\infty)=1$ for any $\rho'$ in $\Sb_{\h_i} $.
\end{cor}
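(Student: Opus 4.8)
The plan is to pass to the adjoint $\mathfrak{P}_{i,j}^*$ through the trace pairing and then reinterpret the condition $\Pro_{i,\rho}(\tau_j<\infty)=1$ as a support (block-structure) condition on $\mathfrak{P}_{i,j}^*(\id)$. Since $\mathfrak{P}_{i,j}$ maps $\mathcal I(\h_i)$ into $\mathcal I(\h_j)$, its adjoint $\mathfrak{P}_{i,j}^*$ maps $\mathcal B(\h_j)$ into $\mathcal B(\h_i)$, and by \Cref{Pij},
\[
\Pro_{i,\rho}(\tau_j<\infty)=\Tr\big(\mathfrak{P}_{i,j}(\rho)\big)=\Tr\big(\rho\,\mathfrak{P}_{i,j}^*(\id)\big)\, .
\]
First I would record two positivity facts. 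Since $\mathfrak{P}_{i,j}$ is positive, for every $\rho\geq 0$ we have $\Tr(\rho\,\mathfrak{P}_{i,j}^*(\id))=\Tr(\mathfrak{P}_{i,j}(\rho))\geq 0$, hence $\mathfrak{P}_{i,j}^*(\id)\geq 0$; and since $\Tr(\rho\,\mathfrak{P}_{i,j}^*(\id))=\Pro_{i,\rho}(\tau_j<\infty)\leq 1=\Tr(\rho)$ for every state $\rho$, we get $\mathfrak{P}_{i,j}^*(\id)\leq\id$. Put $A:=\id-\mathfrak{P}_{i,j}^*(\id)$, so $0\leq A\leq\id$.

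Next I would observe that $\Pro_{i,\rho}(\tau_j<\infty)=1$ is equivalent to $\Tr(\rho A)=0$. Because $\rho\geq 0$ and $A\geq 0$, one has $\Tr(\rho A)=\Tr\big((A^{1/2}\rho^{1/2})^*(A^{1/2}\rho^{1/2})\big)=\|A^{1/2}\rho^{1/2}\|_{\mathrm{HS}}^2$, so $\Tr(\rho A)=0$ is equivalent to $A^{1/2}\rho^{1/2}=0$, hence (multiplying by $\rho^{1/2}$ on the right and by $A^{1/2}$ on the left) to $A\rho=0$, i.e.\ to $\ran\rho\subseteq\ker A$. Since $A$ is self-adjoint, $\ran\rho\subseteq\ker A$ forces, in the decomposition $\h_i=\ran\rho\oplus(\ran\rho)^\perp$, the block form $A=\begin{pmatrix}0&0\\0&*\end{pmatrix}$: the first block-column vanishes because $A$ annihilates $\ran\rho$, and the first block-row then vanishes by self-adjointness. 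Equivalently, $\mathfrak{P}_{i,j}^*(\id)=\id-A=\begin{pmatrix}\id&0\\0&*\end{pmatrix}$, with $*=\id-A_{22}$ some operator in $[0,\id]$ on $(\ran\rho)^\perp$. Conversely, if $\mathfrak{P}_{i,j}^*(\id)$ has this block form then $A=\begin{pmatrix}0&0\\0&\id-*\end{pmatrix}$ annihilates $\ran\rho$, so $A\rho=0$ and $\Tr(\rho A)=0$. This gives the stated equivalence. For the ``in particular'' claim: if some faithful $\rho\in\Sb_{\h_i}$ satisfies $\Pro_{i,\rho}(\tau_j<\infty)=1$, then $\ran\rho=\h_i$, the decomposition is trivial, and the equivalence reads $\mathfrak{P}_{i,j}^*(\id)=\id$; then for any $\rho'\in\Sb_{\h_i}$, $\Pro_{i,\rho'}(\tau_j<\infty)=\Tr(\rho'\,\mathfrak{P}_{i,j}^*(\id))=\Tr(\rho')=1$.

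There is no genuinely deep step here; the proof is essentially a duality computation. The only points deserving care are the legitimacy of the trace-duality pairing used to introduce $\mathfrak{P}_{i,j}^*(\id)$ (already secured by the boundedness of $\mathfrak{P}_{i,j}$ proved in \Cref{Pij}), the deduction $\mathfrak{P}_{i,j}^*(\id)\leq\id$ from the universal bound $\Pro_{i,\rho}(\tau_j<\infty)\leq 1$, and the elementary operator fact that $\Tr(\rho A)=0$ with $\rho,A\geq 0$ implies $A\rho=0$, which is exactly what converts the probabilistic equality into the support/block-diagonal statement.
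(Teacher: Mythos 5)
Your proof is correct and follows essentially the same route as the paper's: the trace-duality identity $\Pro_{i,\rho}(\tau_j<\infty)=\Tr\big(\rho\,\mathfrak{P}_{i,j}^*(\id)\big)$, the bound $\mathfrak{P}_{i,j}^*(\id)\leq\id$, and the block decomposition of $\mathfrak{P}_{i,j}^*(\id)$ with respect to $\h_i=\ran\rho\oplus(\ran\rho)^\perp$. You merely make explicit a few steps the paper leaves implicit, namely why $\mathfrak{P}_{i,j}^*(\id)\leq\id$ and why $\Tr(\rho A)=0$ with $\rho,A\geq 0$ forces $A\rho=0$.
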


\begin{proof}
By \Cref{Pij}, one has $\Pro_{i,\rho}(\tau_j<\infty)=\Tr(\rho \, \mathfrak{P}_{i,j}^*(\id))$. Therefore, if $\Pro_{i,\rho}(\tau_j<\infty)=1$, then  $\mathfrak{P}_{i,j}^*(\id)$ has the following form in the decomposition $\h_i=\ran \rho \oplus (\ran \rho)^\perp$:
\[\mathfrak{P}_{i,j}^*(\id)=\begin{pmatrix}
		\id&A\\
		A&B
		\end{pmatrix} \,  .\]
Besides, the fact that $\id\geq\mathfrak{P}_{i,j}^*(\id)$ forces $A$ to be null. In particular, if $\rho$ is faithful, then $\mathfrak{P}_{i,j}^*(\id)=\id$ and therefore $\Pro_{i,\rho'}(\tau_j<\infty)=1$ for any $\rho'$ in $\Sb_{\h_i} $. 
\end{proof}
	
\subsection{Proof of Theorem \ref{trich}}\label{subsect_proof}

Let $i$ and $j$ be in $V$. As we can see in \Cref{cordij}, if we suppose that $\Pro_{i,\rho}(\tau_j<\infty)=1$ for a faithful density matrix $\rho$, we necessary have $\mathfrak{P}_{i,j}^*(\id)=\id$. This will be used in the following proposition, which in turn explains the statement regarding non-faithfulness in the third category of Theorem \ref{trich}. 

\begin{prop}\label{yet}
	Let $i$ be in $V$. If there exists a faithful $\rho$ in $\Sb_{\h_i}$ such that $\Pro_{i,\rho}(\tau_i<\infty)=1$, then one has $\E_{i,\rho'}(n_i)=\infty$ for any $\rho'$ in $\Sb_{\h_i} $.
\end{prop}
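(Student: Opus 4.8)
The plan is to use \Cref{core} to express $\E_{i,\rho'}(n_i)=\sum_{k\geq0}\Tr\big(\mathfrak P_{i,i}^k(\rho')\big)$, and to show this series diverges. The key observation is that, by \Cref{cordij}, the hypothesis $\Pro_{i,\rho}(\tau_i<\infty)=1$ with $\rho$ faithful forces $\mathfrak P_{i,i}^*(\id)=\id$, i.e.\ $\mathfrak P_{i,i}$ is trace-preserving on $\mathcal I_1(\h_i)$. Since $\mathfrak P_{i,i}$ is also completely positive (by \Cref{Pij}), it is a quantum channel. Consequently, for \emph{any} $\rho'\in\Sb_{\h_i}$ and every $k\in\N$, the iterate $\mathfrak P_{i,i}^k(\rho')$ is again a density matrix, so $\Tr\big(\mathfrak P_{i,i}^k(\rho')\big)=1$, and the series $\sum_{k\geq0}1$ diverges. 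Hence $\E_{i,\rho'}(n_i)=\infty$.

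**Carrying it out.** First I would invoke the hypothesis and \Cref{cordij}: since $\rho$ is faithful and $\Pro_{i,\rho}(\tau_i<\infty)=1$, the decomposition $\h_i=\ran\rho\oplus(\ran\rho)^\perp$ is trivial ($\ran\rho=\h_i$), so the corollary gives $\mathfrak P_{i,i}^*(\id)=\id$. Next I would record that $\mathfrak P_{i,i}$ is completely positive — this is part of the statement of \Cref{Pij} — and that the identity $\mathfrak P_{i,i}^*(\id)=\id$ is exactly the statement that $\mathfrak P_{i,i}$ preserves the trace on all of $\mathcal I_1(\h_i)$ (pair against $\id$: $\Tr(\mathfrak P_{i,i}(X))=\Tr(X\,\mathfrak P_{i,i}^*(\id))=\Tr(X)$ for every trace-class $X$). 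Then, by induction, each power $\mathfrak P_{i,i}^k$ is completely positive and trace-preserving, hence maps $\Sb_{\h_i}$ into $\Sb_{\h_i}$; in particular $\Tr\big(\mathfrak P_{i,i}^k(\rho')\big)=1$ for all $k\geq0$ and all $\rho'\in\Sb_{\h_i}$. Finally, \Cref{core} with $j=i$ yields
\[
\E_{i,\rho'}(n_i)=\sum_{k=0}^\infty\Tr\big(\mathfrak P_{i,i}^k(\rho')\big)=\sum_{k=0}^\infty 1=\infty\,.
\]

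**Main obstacle.** There is no serious obstacle here: the statement is essentially a bookkeeping consequence of the two earlier corollaries, the only conceptual point being to notice that ``faithful $\rho$ with $\Pro_{i,\rho}(\tau_i<\infty)=1$'' upgrades $\mathfrak P_{i,i}^*(\id)=\begin{pmatrix}\id&0\\0&*\end{pmatrix}$ to the full identity, turning $\mathfrak P_{i,i}$ into a genuine channel whose iterates never lose mass. The one place to be slightly careful is the passage from $\mathfrak P_{i,i}^*(\id)=\id$ to trace-preservation on \emph{all} of $\mathcal I_1(\h_i)$ rather than just on self-adjoint or positive elements; since $\h_i$ is finite-dimensional (semifinite CTOQW) and $\mathfrak P_{i,i}$ is bounded, the duality pairing argument above is immediate, so this is routine.
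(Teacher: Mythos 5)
Your argument is correct as a deduction from the results stated earlier in the paper, but it takes a genuinely different route from the paper's own proof. You argue algebraically: faithfulness of $\rho$ upgrades the block form of $\mathfrak{P}_{i,i}^*(\id)$ in \Cref{cordij} to $\mathfrak{P}_{i,i}^*(\id)=\id$, so $\mathfrak{P}_{i,i}$ is a completely positive trace-preserving map, every iterate $\mathfrak{P}_{i,i}^k(\rho')$ is again a state, and the series of \Cref{core} is a divergent sum of ones. (Minor indexing point: with the paper's convention the $k$-th term is $\Tr\big(\mathfrak{P}_{i,i}^{k}\circ\mathfrak{P}_{i,i}(\rho')\big)=\Tr\big(\mathfrak{P}_{i,i}^{k+1}(\rho')\big)$ rather than $\Tr\big(\mathfrak{P}_{i,i}^{k}(\rho')\big)$; this changes nothing since every term equals $1$.) The paper instead works with the quantum trajectory of \Cref{sectQtraj}: \Cref{cordij} gives that every return time $\tau_i^{(n)}$ is almost surely finite, and the Poisson-process description of the jumps yields the uniform lower bound $\E_{i,\hat\rho}(T_n^i)\geq 1/\|G_i+G_i^*\|_\infty$ on the expected sojourn at $i$ between consecutive returns; summing over the infinitely many returns gives the claim. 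Your route is shorter and buys a cleaner structural statement ($\mathfrak{P}_{i,i}$ is a channel, so mass is never lost); the paper's route is self-contained at the level of the process itself.

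The one point you should flag is that your proof leans entirely on \Cref{core}, and the quantity $\sum_k\Tr\big(\mathfrak{P}_{i,i}^{k+1}(\rho')\big)$ is, on its face, the expected \emph{number of passages} through $i$ (each term being the probability of at least $k+1$ returns); the duration of each visit does not appear in that expression, whereas $n_i=\int_0^\infty\ind_{X_t=i}\,\der t$ is an occupation \emph{time}. For a classical chain whose holding times at $i$ have mean $m\neq 1$, the two quantities differ by the factor $m$, so \eqref{eq_core} should really be read as an equality up to the sojourn-time weighting (or with an extra operator $\int_0^\infty e^{sG_i}\cdot e^{sG_i^*}\,\der s$ inserted before the trace). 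Converting ``infinitely many expected visits'' into ``infinite expected occupation time'' requires a uniform positive lower bound on the expected sojourn per visit, and that bound, $1/\|G_i+G_i^*\|_\infty$, is precisely the analytic content of the paper's proof of this proposition. So your argument is complete if \Cref{core} is taken at face value, but the step it delegates to that corollary is exactly the step the paper carries out by hand; for a self-contained proof you should add the sojourn-time estimate (which, in the semifinite setting, is a one-line Gronwall argument from $\frac{\der}{\der s}\Tr\big(e^{sG_i}\sigma e^{sG_i^*}\big)\geq -\|G_i+G_i^*\|_\infty\Tr\big(e^{sG_i}\sigma e^{sG_i^*}\big)$).
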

	
\begin{proof}
We set $\tau_1^i=\tau_i$ and, for all $n>1$, we define $\tau_i^{(n)}$ as the time at which $(X_t)_{t\geq0}$ reaches $i$ for the $n$-th time:
\[\tau_i^{(n)}=\inf\{t>\tau_{n-1}^{i}|\,X_t=i \text{ and }X_{t-}\neq i\}\,.\]
From Corollary \ref{cordij}, one has $\Pro_{i,\rho'}(\tau_i<\infty)=1$ for all $\rho'$ in $\Sb_{\h_i}$. This implies that for all $n>0$, $\tau_i^{(n)}$ is $\Pro_{i,\rho'}$-almost finite for any $\rho'\in \Sb_{\h_i}$. For $n\geq 0$, let $T_n^i$ be the occupation time in $i$ between $\tau_n^i$ and $\tau_{n+1}^i$:
\[T_n^i=\inf\{u\,|\,X_{\tau_i^{(n)}+u}\neq i\}\]
with the convention that $\tau_i^{(0)}=0$. Since we have
\begin{equation*}
\E_{i,\rho'}(n_i)\geq\E_{i,\rho'}\big(\sum_{n\geq1}T_{n}^i\big)\geq \sum_{n\geq 1}\inf_{\hat\rho\in \Sb_{\h_i}}\E_{i,\hat\rho}(T_{n}^i),
\end{equation*}
it will be enough to obtain a lower bound for $\E_{i,\hat\rho}(T_n^i)$ which is uniform in $n$ and in $\hat\rho$. To this end, we use the quantum trajectories defined in \eqref{qtcc}. We first compute $\Pro_{i,\hat\rho}(T_n^i>t)$ for all $t\geq 0$. To treat the case of $n=1$ we consider the solution of
\begin{equation}\label{EDO1}\eta_t^{\hat\rho}=\hat\rho+\int_0^t \big(G_i\,\eta_s^{\hat\rho}+\eta_s^{\hat\rho}\,G_i^*-\eta_s^{\hat\rho}\,\Tr(G_i\,\eta_s^{\hat\rho}+\eta_s^{\hat\rho}\,G_i^*)\big)\,ds\,. \end{equation} Using the independence of the Poisson processes $N^{i,j}$ involved in \eqref{qtcc} we get
\begin{align}
\Pro_{i,\hat\rho}(T_1^i>t)&=\Pro_{i,\hat\rho}(\textrm{no jump has occurred before time t})\nonumber\\
&=\pp_{i,\hat\rho}\big(N^{i,j}\big(\{u,y\,\vert\, 0\leq u\leq t,\, 0\leq y\leq\Tr(R_{i}^j\eta_u^{\hat\rho}{R_{i}^j}^*)\}\big)=0\ \forall j\neq i\big)\nonumber\\
&=\prod_{j\neq i}\pp_{i,\hat\rho}\big(N^{i,j}\big(\{u,y\,\vert\, 0\leq u\leq t,\, 0\leq y\leq\Tr(R_{i}^j\eta_u^{\hat\rho}{R_{i}^j}^*)\}\big)=0\big)\nonumber\\
&=\prod_{j\neq i}\,\exp\big\{-\int_0^t\Tr(R_i^j\eta_s^{\hat\rho}R_i^{j*})\,\der s\big\}
\nonumber\\
&=\exp\Big(\int_0^t\Tr\big((G_i+G_i^*)\,\eta_s^{\hat\rho}\big)\,\der s\Big)\,
\end{align}
where we used relation \eqref{eq_Gi}. Similarly, using the strong Markov property,
\begin{align*}
\Pro_{i,\hat\rho}(T_n^i>t)&=\E_{i,\hat\rho}(\mathbf 1_{T_n^i>t})\nonumber\\
&=\E_{i,\hat\rho}\big(\E_{i,\rho_{\tau_n^i}}(\mathbf 1_{T_1^i>t})\big)\nonumber\\
&=\E_{i,\hat\rho}\Big(\exp\big(\int_0^t \Tr\big((G_i+G_i^*)\, \eta_s^{\rho_{\tau_n^i}}\big)\,\der s\big)\Big)\\
&\geq e^{-t \|G_i+G_i^*\|_\infty} \, . 
\end{align*}
Now, using the fact that $\E_{i,\hat\rho}(T_n^i)=\int_0^\infty\Pro_{i,\hat\rho}(T_n^i>t)\,\der t$, this gives us the expected lower bound:
\[\E_{i,\hat\rho}(T_n^i)\geq \frac{1}{\|G_i+G_i^*\|_\infty} \, .\]
This concludes the proof.
\end{proof}
The next proposition is connected to the first point of \Cref{trich}.
\begin{prop}\label{la312}
Consider a semifinite irreducible continuous-time open quantum walk. If there exist $i,j$ in $V$ and $\rho\in \Sb_{\h_i}$ such that $\E_{i,\rho}(n_j)=\infty$, then one has $\Pro_{j,\rho'}(\tau_j<\infty)=1$ for any $\rho'$ in $\Sb_{\h_j} $.   
\end{prop}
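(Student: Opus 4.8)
The plan is to reduce the claim to the single statement that the completely positive map $\mathfrak P_{j,j}$ of \Cref{Pij} is trace preserving, i.e.\ $\mathfrak P_{j,j}^*(\id)=\id$ (which by \Cref{Pij} immediately gives $\Pro_{j,\rho'}(\tau_j<\infty)=\Tr(\rho'\,\mathfrak P_{j,j}^*(\id))=1$ for all $\rho'$), and to establish that by a Perron--Frobenius argument fed by irreducibility. First I would record the consequences of the hypothesis. By \Cref{12dist} it already implies $\E_{k,\sigma}(n_l)=\infty$ for all $k,l\in V$ and $\sigma\in\Sb_{\h_k}$; specializing to $i=j$ in \Cref{core} and using $\mathfrak P_{j,j}^{k}\circ\mathfrak P_{j,j}=\mathfrak P_{j,j}^{k+1}$ yields
\[\sum_{m\geq 1}\Tr\big(\mathfrak P_{j,j}^{m}(\rho)\big)=\E_{j,\rho}(n_j)=\infty\qquad\text{for every }\rho\in\Sb_{\h_j}.\]
Since the walk is semifinite, $\mathfrak P_{j,j}$ is a bounded operator on the finite dimensional space $\mathcal I(\h_j)$; if its spectral radius $r$ were $<1$ then $\mathfrak P_{j,j}^{m}(\rho)\to 0$ geometrically and the above series would converge, so necessarily $r\geq 1$.

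For the matching upper bound, note that $\mathfrak P_{j,j}$ is completely positive, hence preserves the proper, full cone of positive elements of the finite dimensional space $\mathcal I(\h_j)$, so by Perron--Frobenius there is $\sigma_\ast\geq 0$, $\sigma_\ast\neq 0$, with $\mathfrak P_{j,j}(\sigma_\ast)=r\,\sigma_\ast$. On the other hand $\Tr\big(\mathfrak P_{j,j}(\rho)\big)=\Pro_{j,\rho}(\tau_j<\infty)\leq 1$ by \Cref{Pij}, i.e.\ $0\leq\mathfrak P_{j,j}^*(\id)\leq\id$, whence
\[r\,\Tr(\sigma_\ast)=\Tr\big(\mathfrak P_{j,j}(\sigma_\ast)\big)=\Tr\big(\sigma_\ast\,\mathfrak P_{j,j}^*(\id)\big)\leq\Tr(\sigma_\ast).\]
Thus $r=1$, $\mathfrak P_{j,j}(\sigma_\ast)=\sigma_\ast$, and $\Tr\big(\sigma_\ast(\id-\mathfrak P_{j,j}^*(\id))\big)=0$; since both factors are positive, $\id-\mathfrak P_{j,j}^*(\id)$ vanishes on $\ran\sigma_\ast$, i.e.\ $\mathfrak P_{j,j}^*(\id)$ restricts to $\id$ on $\ran\sigma_\ast$. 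It therefore suffices to prove that $\sigma_\ast$ is faithful.

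This is where I would use irreducibility through \Cref{posimppath}. Let $p$ be the support projection of $\sigma_\ast$; since $p\leq c\,\sigma_\ast$ for some $c>0$ we get $\mathfrak P_{j,j}(p)\leq c\,\mathfrak P_{j,j}(\sigma_\ast)=c\,\sigma_\ast$, an operator supported on $\ran\sigma_\ast$, and as any positive $A$ supported on $\ran\sigma_\ast$ satisfies $A\leq\|A\|\,p$, the map $\mathfrak P_{j,j}$ sends the corner $p\,\mathcal I(\h_j)\,p$ into itself; in other words $\mathcal W:=\ran\sigma_\ast$ is an invariant subspace of $\mathfrak P_{j,j}$. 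Reading off the expansion of $\mathfrak P_{j,j}$ in \Cref{Pij}, and using that all its terms are positive, this invariance is equivalent to $R(\xi)\,\mathcal W\subseteq\mathcal W$ for every $\xi\in\mathcal P(j,j)$ not visiting $j$ strictly between its endpoints (the ``excursions''). Any path in $\mathcal P(j,j)$ decomposes at its successive visits to $j$ into a concatenation of excursions, with a matching factorization $R(\xi)=R(\xi_k)\cdots R(\xi_1)$ of the path operator; hence $R(\xi)\,\mathcal W\subseteq\mathcal W$ for all $\xi\in\mathcal P(j,j)$. Picking $\varphi\in\mathcal W\setminus\{0\}$, the set $\{R(\xi)\varphi:\xi\in\mathcal P(j,j)\}$ lies in $\mathcal W$ but is total in $\h_j$ by \Cref{posimppath}, so $\mathcal W=\h_j$, $\sigma_\ast$ is faithful, and $\mathfrak P_{j,j}^*(\id)=\id$, which finishes the argument.

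I expect the last paragraph to be the main obstacle: one must carefully justify that the invariant subspaces of the completely positive map $\mathfrak P_{j,j}$ are precisely those stabilized by all excursion operators, and that a general $j\to j$ path factors through $j$ into excursions with the corresponding operator factorization (this involves bookkeeping of the holding phases at $j$). Finite dimensionality of the $\h_j$ enters essentially twice — to obtain a genuine Perron--Frobenius eigenvector inside the cone, and to turn ``$\ran\sigma_\ast$ is a nonzero invariant subspace'' into ``$\sigma_\ast$ is faithful''. One could also dispense with \Cref{12dist} in the first paragraph and feed the hypothesis straight into \Cref{core}.
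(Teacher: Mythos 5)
Your proof is correct and follows essentially the same route as the paper: both arguments hinge on showing that the completely positive map $\mathfrak{P}_{j,j}$ of \Cref{Pij} is irreducible (via \Cref{posimppath} and the decomposition of $j\to j$ paths into excursions) and has spectral radius $1$ (via \Cref{core} and the divergence of $\E_{j,\cdot}(n_j)$), so that a faithful invariant state exists. The only differences are cosmetic: you obtain the lower bound on the spectral radius directly from the divergent series rather than by contradiction, and you prove the faithfulness of the Perron eigenvector by hand (thereby also reaching $\mathfrak{P}_{j,j}^*(\id)=\id$ without invoking \Cref{cordij}), where the paper cites the Evans--Hoegh-Kr\o hn theorem for irreducible positive maps.
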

\begin{proof} By Proposition \ref{posimppath}, there is no nontrivial invariant subspace of $\h_j$ left invariant by $R(\xi)$ for all $\xi\in\mathcal P (j,j)$. Since any such $\xi$ is a concatenation of paths from $j$ to $j$ that remain in $V\setminus\{j\}$ except for their start- and endpoints, there is also no nontrivial invariant subspace of $\h_j$ left invariant by the operator $\mathfrak{P}_{j,j} $ of Proposition \ref{Pij}. The latter is therefore a completely positive irreducible map acting on the set of trace-class operators on $\h_j$. By the Russo-Dye Theorem (see \cite{RD}), one has $\|\mathfrak{P}_{j,j}\|=\|\mathfrak{P}_{j,j}^*(\id)\|\leq 1 $, so that the spectral radius $\lambda$ of $\mathfrak{P}_{j,j} $ satisfies $\lambda\leq1$. By the Perron-Frobenius Theorem of Evans and Hoegh-Kr\o hn (see \cite{EHK} or alternatively Theorem 3.1 in \cite{schrader}), there exists a faithful density matrix $\rho'$ on $\h_j$ such that $\mathfrak{P}_{j,j}(\rho')=\lambda\rho' $. If $\lambda<1 $, then by Corollary \ref{core} one has $\E_{j,\rho'}(n_j)<\infty$, but then Proposition \ref{12dist} contradicts our running assumption that $\E_{i,\rho}(n_j)=\infty$. Therefore $\lambda=1$ and $\rho'$ is a faithful density matrix such that $\Pro_{j,\rho'}(\tau_j<\infty)=\Tr\big(\mathfrak{P}_{j,j}(\rho')\big)=\Tr(\rho')=1$. We then conclude by \Cref{cordij}.
\end{proof}



\begin{prop}\label{ijbis}
Consider a semifinite irreducible continuous-time open quantum walk; if there exists $i\in V$ such that for all $\rho'\in \Sb_{\h_i}$ one has $\Pro_{i,\rho'}(\tau_i<\infty)=1$, then $\Pro_{i,\rho}(\tau_j<\infty)=1$ for any $j\in V$ and $\rho\in \Sb_{\h_i}$.
\end{prop}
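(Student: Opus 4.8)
The goal is to propagate the "sure return" property from a single vertex $i$ to all ordered pairs $(i,j)$. The natural strategy is to exploit irreducibility to connect $i$ to any target $j$, combined with the observation that a path from $i$ to $j$ can always be inserted into an excursion from $i$ to $i$. First I would set up the decomposition: by irreducibility (\Cref{posimppath}), there is some $\xi_0 \in \mathcal{P}(i,j)$ and some unit vector in $\h_i$ on which $R(\xi_0)$ acts nontrivially; more usefully, since $\{R(\xi)\varphi : \xi \in \mathcal{P}(i,j)\}$ is total in $\h_j$, the completely positive map $\rho \mapsto \sum_{\text{finitely many }\xi} R(\xi)\rho R(\xi)^*$ is "nonzero in every direction" — concretely, for every nonzero $\rho \in \Sb_{\h_i}$ one has $\mathfrak{P}_{i,j}(\rho) \neq 0$, because otherwise $R(\xi)\sqrt{\rho} = 0$ for all $\xi \in \mathcal{P}(i,j)$, contradicting totality of the range in $\h_j$. (One must be slightly careful that $\mathfrak{P}_{i,j}$ only sums over paths whose interior avoids $j$, but a minimal-length path from $i$ to $j$ has this property automatically, so totality is not lost.)

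\textbf{Key steps.} The core of the argument is a first-passage decomposition at the vertex $i$. Starting from $(i,\rho)$, the process returns to $i$ almost surely (the hypothesis, extended to all $\rho'\in\Sb_{\h_i}$ via \Cref{cordij} since the hypothesis holds for all $\rho'$ — indeed it is stated for all $\rho'$, and in particular $\mathfrak{P}_{i,i}^*(\id)=\id$). Each excursion from $i$ back to $i$ either hits $j$ or does not. I would show that each excursion hits $j$ with a probability bounded below uniformly in the (post-collapse) state at which the excursion starts. Concretely: if $\sigma$ is the state at $i$ at the start of an excursion, the probability that this excursion reaches $j$ before returning to $i$ equals $\Tr(\mathfrak{P}^{(j\text{ before }i)}_{i,j}(\sigma))$ for the obvious sub-operator of $\mathfrak{P}_{i,j}$ counting paths that avoid both $i$ and $j$ in their interior; since $\h_i$ is finite-dimensional and this map is nonzero on every rank-one state (same range-totality argument), compactness of $\Sb_{\h_i}$ gives a uniform lower bound $\delta > 0$. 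Hence, running infinitely many i.i.d.-in-spirit excursions (made rigorous by the strong Markov property of $(X_t,\rho_t)$), the walk reaches $j$ after finitely many excursions with probability one, i.e. $\Pro_{i,\rho}(\tau_j < \infty) = 1$. Finally, once $\Pro_{i,\rho}(\tau_j<\infty)=1$ and $\Pro_{j,\cdot}(\tau_j<\infty)=1$ hold, a Borel–Cantelli-type argument on the successive return times to $j$, or directly the uniform lower bound on hitting $j$ from any state at $i$, upgrades to the claim; actually the cleanest route is: the map $\mathfrak{P}_{i,j}^*(\id)$ satisfies $\mathfrak{P}_{i,j}^*(\id)=\id$ iff $\Pro_{i,\rho}(\tau_j<\infty)=1$ for all $\rho$, by \Cref{cordij}, so it suffices to prove the equality for one faithful $\rho$, which the excursion argument delivers (or we deduce it directly since the bound is uniform over all pure states, hence over all $\rho\in\Sb_{\h_i}$).

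\textbf{Main obstacle.} The delicate point is justifying the uniform lower bound $\delta>0$ on the per-excursion probability of reaching $j$, and in particular ruling out the possibility that this probability degenerates to zero along some sequence of starting states. This is where finite-dimensionality of $\h_i$ is essential: the function $\sigma \mapsto \Tr(\mathfrak{P}^{(j\text{ before }i)}_{i,j}(\sigma))$ is continuous on the compact set $\Sb_{\h_i}$ and strictly positive at every point (by the range-totality consequence of irreducibility, since a state $\sigma$ annihilated by all $R(\xi)$ over interior-avoiding paths would force $\h_j$-range-totality to fail), hence bounded below. A secondary technical care-point is the strong Markov property for the pair $(X_t,\rho_t)$ at the stopping times $\tau_i^{(n)}$ — this is available since $(\mu_t)_t$ is a Markov process (\Cref{omega}), and the excursions are conditionally independent given the embedded state chain. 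I expect the write-up to spend most of its effort on the compactness/positivity argument for $\delta$, with the rest being a standard geometric-trials conclusion.
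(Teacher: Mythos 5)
Your overall architecture (compactness of $\Sb_{\h_i}$ to get a uniform lower bound, then geometric trials at the successive returns to $i$ via the strong Markov property) is the same as the paper's, but the specific quantity you bound below is wrong, and this is where your proof breaks. You want a uniform lower bound on the probability that a single excursion from $i$ hits $j$ \emph{before returning to} $i$, i.e.\ on $\Tr\bigl(\mathfrak{P}^{(j\text{ before }i)}_{i,j}(\sigma)\bigr)$ where the sum runs only over paths avoiding $i$ in their interior. Your justification --- that vanishing of this quantity at some $\sigma$ would contradict the totality of $\{R(\xi)\varphi,\ \xi\in\mathcal P(i,j)\}$ in $\h_j$ --- does not hold: a general path from $i$ to $j$ factors as $R(\xi_2)R(\xi_1)$ where $\xi_1$ is a loop at $i$ and $\xi_2$ avoids $i$ in its interior, and $R(\xi_1)\varphi$ need not lie in the subspace annihilated by the interior-avoiding operators even if $\varphi$ does. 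Concretely, one can have $\h_i=\C^2$ with two exit channels, $e^{tG_i}$ preserving the kernel of the channel that leads toward $j$, so that from some pure state the first excursion surely misses $j$ (per-excursion probability exactly $0$), while irreducibility and the sure-return hypothesis still hold because the state is rotated out of that kernel upon return to $i$. So the infimum you need can be $0$, and the geometric-trials conclusion collapses. (Your preliminary remark that $\mathfrak{P}_{i,j}(\rho)\neq 0$ is fine --- truncating any path at its first visit to $j$ works --- but that map allows revisits to $i$ and is not the one your excursion decomposition uses.)

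The paper's proof repairs exactly this point by changing the event whose probability is bounded below: instead of ``hit $j$ before returning to $i$'', it bounds $\alpha:=\inf_{\rho\in\Sb_{\h_i}}\pp_{i,\rho}(\tau_j\le\delta)$ for a fixed time horizon $\delta$, using finitely many paths $\xi_1,\dots,\xi_p$ from $i$ to $j$ that are \emph{allowed to revisit $i$}; positivity at each $\rho$ then follows directly from \Cref{posimppath}, and compactness plus continuity in $\rho$ and in the jump times gives $\alpha>0$. The sure returns to $i$ are then used only to produce infinitely many starting windows of length $>\delta$, each failing to reach $j$ with probability at most $1-\alpha$. If you replace your per-excursion bound by this fixed-time-window bound, the rest of your argument goes through essentially as you wrote it.
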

\begin{proof}
Fix $i$ and $j$ in $V$. Observe first that, by irreducibility, for any $\rho$ in $\Sb_{\h_i}$, there exists 
\[\xi=(i=i_0,i_1,\ldots,i_{n-1},i_n=j;t_1,\ldots,t_n)\]
such that $\Tr \big(R(\xi)\rho R(\xi)^*\big)>0$. We denote by $t(\xi)$ the element $t_n$ of $\xi$.
Using the continuity of $\Tr\big(R(\xi)\rho R(\xi)^*\big)$ in $\rho$ and the compactness of $\Sb_{\h_i}$, we obtain a finite family $\xi_1,\ldots,\xi_p$, of paths, again going from $i$ to $j$, such that
\[\inf_{\rho\in\Sb_{\h_i}} \max_{k=1,\ldots,p} \Tr\big(R(\xi_k)\rho R(\xi_k)^*\big)>0\,. \]
Let $\delta>\max_{k=1,\ldots,p} t(\xi_k)$. By continuity of each $\Tr\big(R(\xi_i)\rho R(\xi_i)^*\big)$ in the underlying jump times $t_1,\ldots,t_n$ and using expression \eqref{eq:tauj}, we have 
\[\alpha:= \inf_{\rho\in\Sb_{\h_i}} \pp_{i,\rho}(\tau_j\leq \delta)>0\,.\]
Now, if $\pp_{i,\rho}(\tau_i<\infty)=1$ for all $\rho$ in $\Sb_{\h_i}$, then the discussion in \Cref{sectQtraj} imply that almost-surely one can find an increasing sequence $(t_n)_n$ of times with $t_n\to\infty$ and $x_{t_n}=i$. Choose a subsequence $(t_{\varphi(n)})_n$ such that $t_{\varphi(n)}-t_{\varphi(n-1)}>\delta$ for all $n$. Since never reaching $j$ means in particular not reaching $j$ between $t_{\varphi(n)}$ and $t_{\varphi(n+1)}$ for $n=1,\ldots,k$, the Markov property of $(X_t,\rho_t)_{t\geq0}$ and the lower bound $t_{\varphi(n)}-t_{\varphi(n-1)}>\delta$ imply that for all $\rho\in\Scal_{\h_i}$,
\begin{align*}
\pp_{i,\rho}(\tau_j=\infty)
& \leq \pp_{i,\rho}\left(\forall 0\leq n\leq k\,,\,\forall t\in[t_{\varphi(n)},t_{\varphi(n+1)}]\,,\,X_t\ne j\right) \\
& \leq \big(\sup_{\rho\in\h_i}\pp_{i,\rho}(\tau_j>\delta)\big)^k  \\
& \leq (1-\alpha)^k\,.
\end{align*}
Since the above is true for all $k$, we have $\pp_{i,\rho}(\tau_j<\infty)=1$.
\end{proof}

Now we combine all the results of \Cref{subsect_tech} to prove Theorem \ref{trich}.

\begin{proof}[Proof of Theorem \ref{trich}]
Proposition \ref{12dist} shows that either $\E_{i,\rho}(n_j)=\infty $ for all $i,j$ and $\rho$, or  $\E_{i,\rho}(n_j)<\infty$ for all $i,j$ and $\rho$. \Cref{la312} combined with \Cref{ijbis} shows that in the former case, $\pp_{i,\rho}(\tau_j<\infty)=1$ for all $i,j$ and $\rho$ as well. Proposition \ref{yet} shows that, in the latter case, $\pp_{i,\rho}(\tau_j<\infty)=1$ may only occur for non-faithful $\rho$, and this concludes the proof. 
\end{proof}

\noindent \textbf{Acknowledgments}  The authors are supported by ANR project StoQ ANR-14-CE25-0003-01.

\bibliographystyle{abbrv}
\bibliography{RecTran}

\end{document}